\newtheorem{theorem}{Theorem}
\newtheorem{lemma}{Lemma}
\newtheorem{definition}{Definition}
\newtheorem{remark}{Remark}
\newcommand{\bsk}{\boldsymbol{k}}
\newcommand{\bsx}{\boldsymbol{x}}
\newcommand{\bsell}{\boldsymbol{\ell}}
\newcommand{\bsy}{\boldsymbol{y}}
\newcommand{\icomp}{\mathtt{i}}
\newcommand{\bszero}{\boldsymbol{0}}
\newcommand{\rd}{\,\mathrm{d}}
\newcommand{\NN}{\mathbb{N}}
\newcommand{\ZZ}{\mathbb{Z}}
\newcommand{\QQ}{\mathbb{Q}}
\newcommand{\CC}{\mathbb{C}}
\newcommand{\cS}{\mathcal{S}}
\newcommand{\FF}{\mathbb{F}}
\newcommand{\RR}{{\mathbb R}}
\newcommand{\len}{{\rm len}_q}
\newcommand{\walb}{\,_q{\rm wal}}
\newcommand{\rdots}{\mathinner{\mkern1mu\lower-1\p@\vbox{\kern7\p@\hbox{.}}
\mkern2mu \raise4\p@\hbox{.}\mkern2mu\raise7\p@\hbox{.}\mkern1mu}}
\begin{document}

\title{A metrical lower bound on the star discrepancy of digital sequences}
\author{Gerhard Larcher\thanks{G. L. is partially supported by the Austrian Science Foundation (FWF), project P21943-N18} and Friedrich Pillichshammer}
\date{}
\maketitle

\begin{abstract}
In this paper we study uniform distribution properties of digital sequences over a finite field of prime order. In 1998 it was shown by Larcher that for almost all $s$-dimensional digital sequences the star discrepancy $D_N^\ast$ satisfies an upper bound of the form $D_N^\ast=O((\log N)^s (\log \log N)^{2+\varepsilon})$ for any $\varepsilon>0$. Generally speaking it is much more difficult to obtain good lower bounds for specific sequences than upper bounds. Here we show that Larchers result is best possible up to some $\log \log N$ term. More detailed, we prove that for almost all $s$-dimensional digital sequences the star discrepancy satisfies $D_N^\ast \ge c(q,s) (\log N)^s \log \log N$ for infinitely many $N \in \NN$, where $c(q,s)>0$ only depends on $q$ and $s$ but not on $N$.   
\end{abstract}


\section{Introduction and statement of the main result}

In this paper we study uniform distribution properties of infinite sequences in the multi-dimensional unit cube. Sequences with excellent distribution properties are required as underlying nodes in so-called quasi-Monte Carlo algorithms for multivariate integration, see for example, \cite{DP10,kuinie,niesiam} for more information in this direction.
 
Let $\cS=(\bsy_n)_{n \ge 0}$ be an infinite sequence in the $s$-dimensional unit cube $[0,1)^s$. For $\bsx=(x_1,\ldots,x_s) \in [0,1]^s$ and $N \in \NN$ (by $\NN$ we denote the set of positive integers) the {\it local discrepancy} $D(\bsx,N)$ of $\cS$ is the difference between the number of indices $n=0,\ldots,N-1$ for which $\bsy_n$ belongs to the interval $[\bszero, \bsx)=\prod_{j=1}^s [0,x_j)$ and the expected number $N x_1\cdots x_s$ of points in $[\bszero,\bsx)$ if we assume a perfect uniform distribution on $[0,1]^s$, i.e., $$D(\bsx,N)=\#\{0 \le n < N \ : \ \bsx_n \in [\bszero,\bsx)\}-N x_1 \cdots x_s.$$

A sequence $\cS$ is called {\it uniformly distributed} if and only if for any $\bsx \in [0,1]^s$ the normalized local discrepancy $D(\bsx,N)/N$ tends to 0 for growing $N$. The {\it star discrepancy} $D_N^\ast$ of a sequence $\cS$ is the $L_\infty$-norm of the local discrepancy, i.e., $$D_N^\ast(\cS)=\|D(\bsx,N)\|_{\infty}.$$ 

It follows from a result of Roth~\cite{roth} that there exists a quantity $c(s)>0$ such that for any sequence $\cS$ in $[0,1)^s$ we have 
\begin{equation}\label{bdroth}
 D_N^{\ast}(\cS) \ge c(s) (\log N)^{s/2} \ \ \ \mbox{ for infinitely many }\ \ N \in \NN.
\end{equation}
For a proof, see, for example, \cite[Chapter~2, Theorem~2.2]{kuinie}. However, the exact lower order of star discrepancy in $N$ is still one of the most famous open problems in the theory of uniform distribution. Many people believe that there exists some $c(s) >0$ such that for any infinite sequence $\cS$ in $[0,1)^s$ we have 
\begin{equation}\label{conj}
D_N^\ast(\cS) \ge c(s) (\log N)^s \ \ \ \mbox{ for infinitely many }\ \ N \in \NN. 
\end{equation}
Until now it is not known whether this conjecture is true for any $s \ge 2$. For dimension $s=1$ the correctness of \eqref{conj}  has been shown by Schmidt~\cite{Schm72distrib}. If the conjectured lower bound \eqref{conj} is correct, then it would be best possible in the order of magnitude in $N$. An excellent introduction to this topic can be found in the book of Kuipers and Niederreiter \cite{kuinie}. Further books dealing with uniform distribution theory, discrepancy and applications are \cite{BC,DP10,dt,matou,niesiam} which can also be warmly recommended. \\

One example of sequences which can achieve a discrepancy bound of order of magnitude $(\log N)^s$ in $N$ are so-called digital sequences over a finite field $\FF_q$ of prime-power order $q$. Such sequences go back to Sobol' \cite{sob} and Faure \cite{fau}, but the detailed introduction and investigation of the general concept was first given by Niederreiter in \cite{nie87}. In the following we always assume that $q$ is a prime number. Hence we can identify $\FF_q$ with $\ZZ_q=\{0,1,\ldots,q-1\}$ equipped with arithmetic operations modulo $q$. By $(\ZZ_q^{\NN})^\top$ we will denote the set of infinite dimensional column vectors over $\ZZ_q$.

\begin{definition}[digital sequences]\rm
Let $s \in \NN$ and $q$ be a prime number. Let $C_1,\ldots, C_s \in \ZZ_q^{\NN \times \NN}$ be $\NN \times \NN$ matrices over $\ZZ_q$. Let $n \in \NN_0$, where $\NN_0=\NN \cup \{0\}$, with $q$-adic expansion $n = n_0 + n_1 q + n_2 q^2+\cdots $ (this expansion is obviously finite) and set $$\vec{n} = (n_0, n_1, n_2, \ldots )^\top \in (\ZZ_q^{\mathbb{N}})^\top.$$ Then define
\begin{equation*}
\vec{x}_{n,j} = C_j \vec{n} \quad \mbox{for } j = 1,\ldots, s
\end{equation*}
where all arithmetic operations are taken modulo $q$. Let $\vec{x}_{n,j} = (x_{n,j,1}, x_{n,j,2},\ldots)^\top$ and define
\begin{equation*}
x_{n,j} = x_{n,j,1} q^{-1} + x_{n,j,2} q^{-2} + \cdots.
\end{equation*}
Then the $n$th point $\boldsymbol{x}_n$ of the sequence $\cS(C_1,\ldots,C_s)$ is given by $\boldsymbol{x}_n = (x_{n,1}, \ldots, x_{n,s})$. A sequence $\cS(C_1,\ldots,C_s)$ constructed this way is called a {\it digital sequence (over $\ZZ_q$)} with generating matrices $C_1,\ldots,C_s$.
\end{definition}

Under certain conditions on the generating matrices it can be shown that the star discrepancy of $\cS(C_1,\ldots,C_s)$ is of order of magnitude $(\log N)^s$ in $N$. For more information we refer to \cite{DP10,nie87,niesiam} and the references therein. Lower bounds for the star discrepancy of digital sequences are difficult to prove and until now we only have the general lower bound \eqref{bdroth} for arbitrary sequences, but no specific results or even improvements for digital sequences. 
There is only a single result in dimension $s=2$ due to Faure \cite{fau95} who showed for one specific $(0,2)$-sequence over $\ZZ_2$ the lower bound $$\limsup_{N \rightarrow \infty} \frac{D_N^\ast(\cS)}{(\log N)^2} \ge \frac{1}{24 (\log 2)^2}.$$

In this paper we are interested in metrical results for the star discrepancy of digital sequences. For metrical problems we need a suitable  probability measure on the set of all $s$-tuples of $\NN \times \NN$ matrices over $\ZZ_q$. We introduce this probability measure in three steps:

\begin{itemize}
\item First, let $\widetilde{\mu}$ be the measure on the sequence space $\ZZ_q^{\NN}$ induced by the equiprobability
measure on $\ZZ_q$. 
\item Then, to define a probability measure $\mu$ on $\ZZ_q^{\NN \times \NN}$, we consider the set $\ZZ_q^{\NN \times \NN}$ of all infinite matrices over $\ZZ_q$ as the product of denumerable many copies of the sequence space $\ZZ_q^{\NN}$ over $\ZZ_q$, and define $\mu$ as
the product measure induced by $\widetilde{\mu}$ on $\ZZ_q^{\NN}$. 
\item Finally, the probability measure $\mu_{s}$ on set $(\ZZ_q^{\NN \times \NN})^s$ of all $s$-tuples of $\NN \times \NN$ matrices over $\ZZ_q$ is the $s$-fold product measure induced by the probability measure $\mu$ on the set
$\ZZ_q^{\NN \times \NN}$.
\end{itemize}

\begin{remark}\rm 
Let us remark the following concerning the
measure $\widetilde{\mu}$. We can identify each $\vec{c} = (c_{1}, c_{2},
\ldots) \in \ZZ_q^{\NN}$ with its generating function
$L=\sum_{k=1}^{\infty} c_{k} z^{-k} \in \ZZ_q((z^{-1}))$, where
$\ZZ_q((z^{-1}))$ is the field of formal Laurent series over $\ZZ_q$ in
the variable $z^{-1}$. In this way we can identify $\ZZ_q^{\NN}$ with the set $H$ of all generating
functions. Consider the {\it discrete exponential valuation} $\widetilde{\nu}$ on $H$ which is defined by $\widetilde{\nu}(L)=-w$ if $L\not=0$ and $w$ is the least index with $c_w \not=0$. For $L=0$ we set $\widetilde{\nu}(0)=-{\infty}$. With the topology induced by the discrete exponential valuation $\widetilde{\nu}$ and with respect to addition, $H$ is a compact abelian group, and $\widetilde{\mu}$ then is the unique Haar probability measure on $H$.
\end{remark}

With the probability measure $\mu_s$ at hand Larcher~\cite{L98} proved the following metrical upper bound on the star discrepancy of digital sequences:

\begin{theorem}[Larcher, 1998]\label{thmLar}
Let $s \in \NN$, let $q$ be a prime number and let $\varepsilon>0$.
Then for $\mu_s$-almost all $s$-tuples $(C_1,\ldots,C_s) \in (\ZZ_q^{\NN \times \NN})^s$ of generating matrices the digital
sequence $\cS(C_1,\ldots,C_s)$ over $\ZZ_q$ has star discrepancy satisfying
$$D_N^\ast(\cS(C_1,\ldots,C_s)) \le c(q,s,\varepsilon) (\log N)^s (\log \log
N)^{2+\varepsilon}$$ with some $c(q,s,\varepsilon)>0$ not depending on $N$.
\end{theorem}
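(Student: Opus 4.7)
The plan is to combine a Walsh-analytic discrepancy bound with a Borel--Cantelli argument applied to the random columns of the matrices $C_1,\ldots,C_s$. The whole strategy treats digital sequences in their natural Walsh setting and extracts the metrical bound from first-moment estimates for linear conditions over $\FF_q$.

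First I would invoke the Walsh-function analogue of the Erd\H{o}s--Tur\'an--Koksma inequality. For any truncation parameter $M \in \NN$ it provides a bound of the shape
$$D_N^\ast(\cS) \le \frac{c_1(s,q)N}{q^M} + c_2(s,q) \sum_{\substack{\bsk \in \NN_0^s \setminus \{\bszero\} \\ k_j < q^M}} \frac{1}{r_q(\bsk)} \left|\sum_{n=0}^{N-1} \wal_{\bsk}(\bsx_n)\right|,$$
where $\wal_{\bsk}$ is the multivariate $q$-adic Walsh character and $r_q(\bsk) = \prod_{j=1}^s \max(1, q^{\lfloor \log_q k_j \rfloor})$. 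Writing $N$ in its $q$-adic form $N = \sum_{a=1}^{t} N_{j_a}q^{j_a}$ with $j_1 > j_2 > \cdots$ and $t = O(\log N)$, the inner Walsh sum decomposes into $t$ block contributions, each taken over a digital net of size $q^{j_a}$.

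Second I would exploit the digital structure of each block. For fixed $\bsk$ of coordinatewise digit length at most $j_a$, the block sum over the corresponding $q^{j_a}$ points of $\cS$ equals $q^{j_a}$ whenever $\bsk$ lies in the ``dual net'' determined by the first $j_a$ columns of $C_1,\ldots,C_s$ --- a linear condition over $\FF_q$ --- and vanishes otherwise. Thus controlling $D_N^\ast(\cS)$ is reduced to counting, in a weighted fashion, the vectors $\bsk$ that solve certain linear systems governed by the random matrices.

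Third comes the probabilistic input. Under $\mu_s$ the relevant columns are i.i.d.\ uniform over $\ZZ_q$, so for each fixed nonzero $\bsk$ of digit length at most $m$ the probability of lying in the dual net of size $q^m$ is exactly $q^{-m}$. A first-moment calculation combined with Borel--Cantelli, applied along a geometric sequence of thresholds in both $m$ and $r_q(\bsk)$, shows that for $\mu_s$-almost every tuple $(C_1,\ldots,C_s)$ the weighted mass $\sum 1/r_q(\bsk)$ over bad vectors $\bsk$ in the dual net of size $q^m$ with $r_q(\bsk) \leq q^m$ is $O(m^{s-1}(\log m)^{1+\varepsilon})$ for all sufficiently large $m$. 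Inserting this into the Walsh bound with $M \asymp \log_q N$ and summing the contributions over the $O(\log N)$ blocks yields the claimed estimate $c(q,s,\varepsilon)(\log N)^s (\log\log N)^{2+\varepsilon}$.

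I expect the chief obstacle to be precisely the $(\log\log N)^{2+\varepsilon}$ factor. It arises from compounding two almost-sure statements that must both hold uniformly: one over the block scale $m \lesssim \log_q N$, the other over weight classes $r_q(\bsk) \leq q^m$. Each requires a summable Borel--Cantelli tail, which forces a $(\log\log N)^{1+\varepsilon/2}$ slack, and removing either one would demand concentration inequalities sharper than the Chebyshev-type first-moment bound available here.
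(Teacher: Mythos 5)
Theorem~\ref{thmLar} is quoted in this paper for context only; it is not proved here but cited from Larcher's 1998 article \cite{L98}, so there is no in-paper argument to compare your proposal against.

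Assessed on its own, your sketch identifies the right toolbox: the $q$-adic Walsh analogue of the Erd\H{o}s--Tur\'an--Koksma inequality, the dual-net description of Walsh character sums over digital nets, and a Borel--Cantelli argument over the random generating matrices. This is consistent with the Walsh-analytic machinery the present paper develops (Lemmas~\ref{le1}, \ref{le2}, \ref{le6}) for the complementary lower bound, and it is the natural framework for Larcher's result. The gap lies in the quantitative third step, which is the heart of the matter. First, if ``bad'' $\bsk$ means that $\bsk$ lies in the dual net at scale $q^m$, then each such $\bsk$ contributes $N/r_q(\bsk) \approx q^m/r_q(\bsk)$ to the unnormalised Erd\H{o}s--Tur\'an--Koksma sum, not $1/r_q(\bsk)$, so an almost-sure bound of $O(m^{s-1}(\log m)^{1+\varepsilon})$ on $\sum_{\text{bad}} 1/r_q(\bsk)$ would produce a contribution of order $q^m m^{s-1}(\log m)^{1+\varepsilon}$, enormously larger than the target. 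Second, if one instead uses the sharper bound $|\sum_{n<N}\walb_{\bsk}(\bsx_n)| \le c\, q^{w(\bsk)}$ in the spirit of Lemma~\ref{le6} (with $w(\bsk)=-\nu(\sum_j C_j^\top\vec{k}_j)$) and takes expectations, one finds that $\mathbb{E}\sum_{\bsk} q^{\min(w(\bsk),m)}/r_q(\bsk)$ is of order $m^{s+1}=(\log N)^{s+1}$, which is \emph{worse} than Larcher's almost-sure bound; so a first-moment-plus-Borel--Cantelli argument of the Chebyshev type you invoke cannot by itself reach $(\log N)^s(\log\log N)^{2+\varepsilon}$. The almost-sure bound is smaller than the mean precisely because the sum is dominated by a few $\bsk$ with atypically large $w(\bsk)$, and those rare events must be isolated via a much finer stratification over $r_q(\bsk)$, over $w(\bsk)$, and over block scale than you describe. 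Finally, multiplying your claimed $m^{s-1}(\log m)^{1+\varepsilon}$ estimate by the $O(\log N)$ blocks would yield $(\log N)^s(\log\log N)^{1+\varepsilon}$, one $\log\log N$ factor short of the target, so even the intended bookkeeping does not reproduce the stated exponent. In short: correct toolbox and a sensible heuristic, but the crucial probabilistic step is underdeveloped and, as written, would not prove the theorem.
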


This means that $\mu_s$-almost all digital sequences achieve the conjectured best possible bound on the star discrepancy up to some $\log \log N$ term.

In this paper we will show that the result of Theorem~\ref{thmLar} is best possible (up to some $\log \log N$ term). We will prove:

\begin{theorem}\label{th2}
Let $s \in \NN$ and let $q$ be a prime number. Then for $\mu_s$-almost all $s$-tuples $(C_1,\ldots,C_s) \in (\ZZ_q^{\NN \times \NN})^s$ of generating matrices the digital
sequence $\cS(C_1,\ldots,C_s)$ over $\ZZ_q$ has star discrepancy satisfying
$$D_N^\ast(\cS(C_1,\ldots,C_s)) \ge c(q,s) (\log N)^s \log \log
N \ \ \ \mbox{ for infinitely many } N \in \NN$$ with some $c(q,s)>0$ not depending on $N$.
\end{theorem}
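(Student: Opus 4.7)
The plan is to reduce the lower bound for $D^*_{q^m}$ to the existence of a sufficiently small non-zero vector in the dual net of $\cS(C_1,\ldots,C_s)$, and then to show via a Borel--Cantelli type argument that, $\mu_s$-almost surely, such vectors exist for infinitely many $m$.

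\emph{Reduction to dual vectors.} For $\bsk=(k_1,\ldots,k_s)\in\NN_0^s$ with $q$-adic digits $\kappa_{j,i}$, one has $\sum_{n=0}^{q^m-1}\wal_\bsk(\bsx_n)=q^m$ iff the $\ZZ_q$-linear combination $\sum_{j,i}\kappa_{j,i}C_j^{(i+1)}$ of rows of the $C_j$'s vanishes on the first $m$ columns, and equals $0$ otherwise; write $\D_m^*$ for the set of non-zero such $\bsk$. Expanding $\chi_{[\bszero,\bsx)}$ in Walsh series yields
$D(\bsx,q^m) = q^m\sum_{\bsk\in\D_m^*}\widehat{\chi}_{[\bszero,\bsx)}(\bsk)$.
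For a designated $\bsk^*\in\D_m^*$ with $q$-adic sizes $a_j^*$, a resonant choice of $\bsx$ gives $|\widehat{\chi}_{[\bszero,\bsx)}(\bsk^*)|\asymp\prod_j q^{-a_j^*}\asymp 1/\prod_j\max(1,k_j^*)$; after controlling the contributions of the other dual terms (by averaging $\bsx$ over a small dyadic family, or by applying Theorem~\ref{thmLar} to the heavy-weight tail), one obtains a single-vector bound
$$D^*_{q^m}(\cS(C_1,\ldots,C_s))\;\ge\;c_1(q,s)\,\frac{q^m}{\prod_{j=1}^{s}\max(1,k_j^*)}.$$
It thus suffices to prove that, almost surely, $\D_m^*$ contains a vector of weight $\prod_j\max(1,k_j^*)\le q^m/(m^s\log m)$ for infinitely many $m$.

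\emph{Probability and counting.} By the independence of the entries of the $C_j$'s, $\mu_s(\bsk\in\D_m)=q^{-m}$ for every fixed $\bsk\ne\bszero$: membership encodes $m$ independent uniform $\ZZ_q$-linear constraints. A standard divisor-type count gives $\#\{\bsk\ne\bszero:\prod_j\max(1,k_j)\le W\}\asymp W(\log W)^{s-1}$, so with the threshold $W_m := q^m/(m^s\log m)$,
$$E\bigl|\{\bsk\in\D_m^*:\prod_j\max(1,k_j)\le W_m\}\bigr|\;\asymp\;\frac{1}{m\log m}.$$
Since $\sum_{m\ge 2}(m\log m)^{-1}=\infty$, a second Borel--Cantelli argument would give, $\mu_s$-almost surely, infinitely many $m$ with a sub-threshold dual vector, and the single-vector reduction then delivers $D^*_{q^m}\ge c(q,s)\,m^s\log m$, i.e.\ of order $(\log N)^s\log\log N$.

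\emph{Main obstacle.} The events ``$\D_m^*$ contains a small-weight vector'' for different $m$ are strongly correlated because they depend on overlapping blocks of rows and columns of the $C_j$'s. To apply the second Borel--Cantelli lemma I would pass to a subsequence $m_1<m_2<\cdots$ that is sparse enough to permit independence yet dense enough that $\sum_j 1/(m_j\log m_j)$ still diverges, partition the rows of each $C_j$ into disjoint blocks $B_1,B_2,\ldots$, and at stage $m_j$ only count dual vectors whose $q$-adic digits have support in $B_j$; this decouples the events by forcing them to depend on disjoint matrix entries. The tension between sparsity (needed for disjointness of the blocks) and density (needed for divergence of the sum of probabilities) is precisely what pins the extra factor in the lower bound to $\log\log N$ and not something smaller. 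A secondary technicality is justifying the single-vector reduction in the face of cumulative contributions from the remaining dual vectors; this is handled by averaging $\bsx$ over a small dyadic family, so that the targeted $\bsk^*$ contributes coherently while the others average out.
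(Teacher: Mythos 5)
Your overall strategy --- resonance at a single small dual vector, then a Borel--Cantelli argument to produce such vectors for infinitely many $m$ --- does match the paper's philosophy, but the two steps you flag as ``to be handled'' are exactly where the mathematics lies, and your proposed fixes for both would fail.

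\emph{Independence.} You note that the events ``a small dual vector exists at level $m$'' are correlated across $m$, and you propose sparsifying to $m_1<m_2<\cdots$ with disjoint blocks of matrix entries. This hits a hard quantitative wall: your own computation gives expected count $\asymp 1/(m\log m)$, and $\sum 1/(m\log m)$ is \emph{right at the boundary} of divergence. Any sparsification fast enough to expose fresh disjoint columns of $C_1,\ldots,C_s$ (at minimum $m_{j+1}\ge 2m_j$) makes $\sum_j 1/(m_j\log m_j)\le\sum_j 1/(2^j j)<\infty$, so second Borel--Cantelli gives nothing. The paper avoids the problem by indexing events not by $m$ but by the candidate dual vector $\bsk=(k_1,\ldots,k_s)$, drawn from a set $P$ of $s$-tuples whose components all have leading $q$-adic digit one. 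For distinct $\bsk,\bsell\in P$ these are never ``strongly dependent'', and the Walsh-integral computation (Lemma~\ref{le3a}, applied inside the proof of Lemma~\ref{le4}) gives $\mu_s(M(\bsk)\cap M(\bsell))=\mu_s(M(\bsk))\mu_s(M(\bsell))$: \emph{pairwise} independence. Pairwise independence suffices for the divergence Borel--Cantelli in the Chung--Erd\H{o}s/Sprind\v{z}uk form (Lemma~\ref{le5}), yielding $\mu_s(\overline M)=1$ with no sparsification at all; $m$ and $N=q^{m-1}$ are then chosen as functions of the surviving $\bsk$, and infinitely many $\bsk$ give infinitely many $N$.

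\emph{Single-vector reduction.} Your claimed bound $D^*_{q^m}\ge c\,q^m/\prod_j\max(1,k_j^*)$ needs the contributions of the other dual vectors not to cancel the resonance at $\bsk^*$, and neither of your suggested devices handles this. Invoking Theorem~\ref{thmLar} for the tail cannot work: that almost-sure upper bound is of order $(\log N)^s(\log\log N)^{2+\eps}$, the \emph{same} order (indeed with a larger $\log\log$ power) as the lower bound sought, so it cannot be absorbed as an error term. Averaging over $\bsx$ is the right idea, and the paper executes it as $\Lambda(\bsk^*)=\sum_{\bsx\in\QQ^s(q^m)}D(\bsx,N)\walb_{\bsk^*}(\bsx)$; by the Walsh orthogonality built into Lemma~\ref{le6} only frequencies of the form $\widetilde k_j+\beta_j(\widetilde k_j,u_j)$ survive. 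But this leaves a genuine danger you do not address: one of those harmonically related frequencies with some $u_j\ne 0$ could itself be a near-dual vector, making its term as large as the main one. The paper rules this out $\mu_s$-almost surely with the separate (convergent) Borel--Cantelli in Lemma~\ref{le3b}. Without an analogue of Lemma~\ref{le3b}, ``the others average out'' is an unjustified claim, not a proof.

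In short, the skeleton and the heuristic for the $\log\log N$ factor (barely divergent series) are right, but both places your sketch says ``this is handled by\ldots'' contain real gaps; the paper fills them with pairwise independence plus Sprind\v{z}uk's lemma on the one hand, and the exclusion set of Lemma~\ref{le3b} on the other.
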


For the proof of the result we use an approach similar to the technique used by Beck \cite{beck} to give a metric lower bound for the discrepancy of Kronecker sequences.

\section{Auxiliary results}

We begin with some basic notations. Throughout the paper let $q$ be a prime number and let $s \in \NN$. 
Let $C,C_1,C_2,\ldots \in \ZZ_q^{\NN \times \NN}$ and let $k,\ell,k_1,k_2,\ldots \in \NN_0$.  By $\vec{k},\vec{\ell},\vec{k}_1,\vec{k}_2,\ldots$ we denote the digit vectors of $k,\ell,k_1,k_2,\ldots$ in base $q$, respectively, i.e., for $k=\kappa_0+\kappa_1 q+\cdots +\kappa_r q^r$ with $\kappa_r \not=0$ we set $$\vec{k}=(k_0,k_1,\ldots,k_r,0,\ldots)^\top \in (\ZZ_q^\NN)^\top$$ and we define $$\len(k)=\len(\vec{k}):=r= \lfloor \log_q k\rfloor.$$ For $k_1,k_2 \in \NN_0$ we write $k_1 \oplus k_2$ for the integer with digit vector $\vec{k}_1 +\vec{k}_2$, where the addition of digit vectors is always component wise modulo $q$. By $C^\top$ we denote the transpose of the matrix $C$.

An important tool in our analysis are $q$-adic Walsh functions which we introduce now:

\begin{definition}[$q$-adic Walsh functions]\rm
Let $q$ be a prime number and let $\omega_q:=\exp(2 \pi \icomp/q)$ be the $q$th root of unity. For $j \in \NN_0$ with $q$-adic expansion $j=j_0+j_1 q+j_2 q^2+\cdots$ (this expansion is obviously finite) the $j$th $q$-adic {\it Walsh function} $\walb_j:\RR \rightarrow \CC$, periodic with period one, is defined as $$\walb_j(x)=\omega_q^{j_0 \xi_1+j_1 \xi_2+j_2 \xi_3+\cdots}$$ whenever $x \in [0,1)$ has $q$-adic expansion of the form $x=\xi_1 q^{-1}+\xi_2 q^{-2}+\xi_3 q^{-3}+\cdots$ (unique in the sense that infinitely many of the digits $\xi_i$ must be different from $q-1$).
\end{definition}

In the following it is sometimes convenient to use the following notation: to $x \in [0,1)$ with $q$-adic expansion of the form $x=\xi_1 q^{-1}+\xi_2 q^{-2}+\xi_3 q^{-3}+\cdots$ we associate its $q$ adic digit vector $$\vec{x}=(\xi_1,\xi_2,\xi_3,\ldots)^\top$$ and then we write $$\walb_j(\vec{x}):=\walb_j(x).$$

For a vector $\vec{b}=(b_1,b_2,\ldots)^\top \in (\ZZ_q^\NN)^\top$ we say $\nu(\vec{b})=-w$ if it is of the form $$\vec{b}=(0,0,\ldots,0,b_w,b_{w+1},\ldots)^\top \ \mbox{ with }\ b_w \not=0.$$ 

In the following lemma we collect some useful properties of Walsh functions which we require for the proof of Theorem~\ref{th2}. More information on Walsh functions can be found in \cite[Appendix~A]{DP10}.

\begin{lemma}\label{le1}
We have 
\begin{enumerate}
\item \label{ass1} $\walb_j(C_1 \vec{k}_1 + C_2 \vec{k}_2)= \walb_{j}(C_1 \vec{k}_1) \walb_{j}(C_2 \vec{k}_2)$;
\item \label{ass2} $$\sum_{x=0}^{q^m-1} \walb_k(x/q^m) \overline{\walb_\ell(x/q^m)}=\left\{
\begin{array}{ll}
1 &  \mbox{ if } \nu(\vec{k}-\vec{\ell}) < -m, \\
0 & \mbox{ otherwise},
\end{array}\right.$$ (orthonormality of Walsh functions); and
\item \label{ass3} $$\int_{\ZZ_q^{\NN \times \NN}} \walb_i(C^\top \vec{k}) \rd \mu(C)= \left\{
\begin{array}{ll}
1 & \mbox{ if } i=0 \mbox{ or } \vec{k}=\vec{0},\\
0 & \mbox{ otherwise}. 
\end{array}\right.$$
\end{enumerate}
\end{lemma}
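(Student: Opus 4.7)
My plan is to dispatch all three identities by direct computation, since each is an elementary character-theoretic fact on $\ZZ_q^{\NN}$. The unifying observation is that $\walb_j$ evaluated on any digit vector $\vec{y}$ equals $\omega_q$ raised to the $\ZZ_q$-bilinear pairing $\sum_{i \ge 0} j_i y_{i+1}$. For part~(\ref{ass1}) I would simply unfold definitions: addition in $\ZZ_q^{\NN}$ is component-wise modulo $q$, so the digit vector of $C_1 \vec{k}_1 + C_2 \vec{k}_2$ is the digit-wise sum of those of $C_1 \vec{k}_1$ and $C_2 \vec{k}_2$, and multiplicativity then follows from $\omega_q^{a+b}=\omega_q^a \omega_q^b$ applied position by position.

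For part~(\ref{ass2}) I would parameterize $x \in \{0,\ldots,q^m-1\}$ by the $m$ digits $(\xi_1,\ldots,\xi_m) \in \ZZ_q^m$ appearing in the $q$-adic expansion of $x/q^m$. Since all remaining digits are zero, only $k_0,\ldots,k_{m-1}$ and $\ell_0,\ldots,\ell_{m-1}$ enter $\walb_k(x/q^m)\overline{\walb_\ell(x/q^m)}$, and the sum factors as
$$\prod_{i=0}^{m-1} \sum_{\xi=0}^{q-1} \omega_q^{(k_i-\ell_i)\xi}.$$
Each inner sum equals $q$ if $k_i \equiv \ell_i \pmod q$ and vanishes otherwise, so the full sum is nonzero precisely when the first $m$ digits of $\vec{k}-\vec{\ell}$ agree, which matches the condition $\nu(\vec{k}-\vec{\ell}) < -m$.

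For part~(\ref{ass3}) the trivial cases $i=0$ and $\vec{k}=\vec{0}$ follow from $\walb_0 \equiv 1$ and $\walb_i(\vec{0})=1$. In the remaining case, $\walb_i$ depends on only finitely many digits of its argument (those $j$ with $i_j \neq 0$), so the integral reduces to a finite-dimensional one over finitely many independent copies of $\ZZ_q$. Because $\vec{k} \neq \vec{0}$, some $k_r$ is a unit in $\FF_q$; then for each relevant column index $j$, the coordinate $(C^\top \vec{k})_j = \sum_r C_{r,j} k_r$ is a non-degenerate $\ZZ_q$-linear combination of the independent uniform entries of the $j$-th column of $C$, hence uniform on $\ZZ_q$, and these are jointly independent across $j$. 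The integral then becomes a product of nontrivial $\ZZ_q$-characters evaluated on uniform variables, each summing to zero.

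The main obstacle is genuinely only bookkeeping: reconciling the two indexing conventions in the paper (digit vectors begin at index $0$ for $\vec{k}$, while $\nu$ is defined on vectors indexed from $1$) and phrasing the reduction of the infinite-dimensional integral in~(\ref{ass3}) cleanly by isolating the finitely many coordinates of $C$ on which the integrand actually depends. No deeper ideas are required.
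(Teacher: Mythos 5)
Your argument is correct and matches the paper's approach. The paper cites \cite{DP10} for parts~\ref{ass1} and~\ref{ass2} as standard, and for part~\ref{ass3} writes the integral as the explicit double product $\prod_{a=0}^{r-1}\prod_{b=0}^{m-1}\sum_{c=0}^{q-1}\omega_q^{c\,i_a k_b}$ over the finitely many relevant entries of $C$ and observes that at least one factor vanishes --- this is precisely your ``non-degenerate linear combination of independent uniform entries'' reasoning made concrete. One small observation worth flagging: your factorization in part~\ref{ass2} yields $\prod_{i=0}^{m-1}\sum_{\xi=0}^{q-1}\omega_q^{(k_i-\ell_i)\xi}$, whose nonzero value is $q^m$ rather than $1$; the lemma as printed appears to omit a $q^{-m}$ normalization (the orthonormality relation is tacitly used in normalized form later, e.g., in the computation of $\theta(k)$), and your calculation would have exposed this.
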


\begin{proof}
Point {\it \ref{ass1}.} and {\it \ref{ass2}.} are standard properties of Walsh functions and are easily deduced from their definition (see also \cite[Appendix~A]{DP10}). The assertion of {\it \ref{ass3}.} is clear whenever $i=0$ or $k=0$. Assume now that $i \not=0$ and $k\not=0$ with $q$-adic digit expansions $i=i_0+i_1 q+\cdots +i_{r-1} q^{r-1}$ and $k=k_0+k_1 q+\cdots +k_{m-1} q^{m-1}$, respectively. Then we have
$$\int_{\ZZ_q^{\NN \times \NN}} \walb_i(C^\top \vec{k}) \rd \mu(C) =  \prod_{a=0}^{r-1} \prod_{b=0}^{m-1} \sum_{c=0}^{q-1} \omega_q^{c i_a k_b}.$$ Since $i,k$ are both different from zero, there exist $a,b$ such that 
$i_a k_b \not=0$ and for this choice we have $\sum_{c=0}^{q-1} \omega_q^{c i_a k_b}=0$ and the result follows.
\end{proof}

Now we present a series of auxiliary lemmas for the proof of Theorem~\ref{th2}.\\

\begin{lemma}\label{le2}
For $m \in \NN$ and $k_1,\ldots,k_s \in \NN_0$, not all of them $0$, let $$M_m(k_1,\ldots,k_s):=\{(C_1,\ldots,C_s) \in (\ZZ_q^{\NN \times \NN})^s \ : \ \nu(C_1^\top\vec{k}_1+\cdots +C_s^\top\vec{k}_s) \le -m\}.$$ Then we have $$\mu_s( M_m(k_1,\ldots,k_s) ) = \frac{1}{q^{m-1}}.$$
\end{lemma}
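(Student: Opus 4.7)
The approach is to write the indicator of $M_m(k_1,\ldots,k_s)$ in terms of Walsh characters and then integrate using parts~\ref{ass1} and~\ref{ass3} of Lemma~\ref{le1}.

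The first step is a translation of the valuation condition. By the definition of $\nu$, the inequality $\nu(\vec{b}) \le -m$ says exactly that the first $m-1$ entries of $\vec{b}=(b_1,b_2,\ldots)^\top$ vanish in $\ZZ_q$ (this includes the case $\vec{b}=\vec{0}$). Character orthogonality on the group $\ZZ_q^{m-1}$ then gives
$$
\mathbf{1}[\nu(\vec{b}) \le -m] \;=\; \frac{1}{q^{m-1}} \sum_{\alpha=0}^{q^{m-1}-1} \walb_\alpha(\vec{b}),
$$
where we use that for $\alpha=\alpha_0+\alpha_1 q+\cdots+\alpha_{m-2}q^{m-2}$ with $0\le\alpha<q^{m-1}$, the value $\walb_\alpha(\vec{b})=\omega_q^{\alpha_0 b_1+\cdots+\alpha_{m-2}b_{m-1}}$ depends only on $b_1,\ldots,b_{m-1}$ and, as $\alpha$ varies, runs through every character of $\ZZ_q^{m-1}$ exactly once.

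The second step is to apply this identity to $\vec{b}=C_1^\top\vec{k}_1+\cdots+C_s^\top\vec{k}_s$, factor via part~\ref{ass1} of Lemma~\ref{le1}, and integrate. Since $\mu_s$ is a product measure over the factors $C_1,\ldots,C_s$, Fubini's theorem yields
$$
\mu_s(M_m(k_1,\ldots,k_s)) \;=\; \frac{1}{q^{m-1}} \sum_{\alpha=0}^{q^{m-1}-1} \prod_{j=1}^s \int_{\ZZ_q^{\NN\times\NN}} \walb_\alpha(C_j^\top\vec{k}_j)\,\rd\mu(C_j).
$$

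The final step is to invoke part~\ref{ass3} of Lemma~\ref{le1}: each factor equals $1$ when $\alpha=0$ or $\vec{k}_j=\vec{0}$, and $0$ otherwise. By hypothesis at least one $\vec{k}_{j_0}$ is non-zero, so the corresponding factor kills the product unless $\alpha=0$. Only the $\alpha=0$ term then survives, contributing $1$, and we obtain $\mu_s(M_m(k_1,\ldots,k_s))=1/q^{m-1}$. There is no real obstacle; the only non-trivial observation is the identification of $\nu(\vec{b})\le -m$ with the vanishing of the first $m-1$ digits of $\vec{b}$, after which Lemma~\ref{le1} does all the work.
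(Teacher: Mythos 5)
Your proof is correct and follows essentially the same route as the paper's: both express the indicator of the event $\nu(\vec{b})\le -m$ as a Walsh (character) sum over indices $0\le\alpha<q^{m-1}$, factor the integral over the product measure using part~\ref{ass1} of Lemma~\ref{le1}, and kill every term except $\alpha=0$ via part~\ref{ass3}. The only cosmetic difference is that you write the coefficients explicitly as $q^{-(m-1)}$ by character orthogonality on $\ZZ_q^{m-1}$, whereas the paper invokes a cited Walsh-series expansion of the interval indicator $\chi_{[0,q^{-(m-1)})}$ and records only the constant coefficient $a_0=q^{-(m-1)}$, which is all it needs.
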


\begin{proof}
Let $\chi$ be the characteristic function of the interval $[0,q^{-(m-1)})$. Then, according to \cite[Lemma~3.9]{DP10}, $\chi$ has a finite Walsh series representation in base $q$ of the form $$\chi(x)=\sum_{i=0}^{q^{m-1} -1} a_i \walb_i(x)$$ with $a_0=q^{-(m-1)}$. In the following we identify $x$ with its $q$-adic digit vector $\vec{x}$ and we write $\chi(\vec{x}):= \chi(x)$. With this notation we have
\begin{eqnarray*}
\mu( M_m(k_1,\ldots,k_s) ) & = & \int_{(\ZZ_q^{\NN \times \NN})^s} \chi(C_1^\top \vec{k}_1+\cdots +C_s^\top \vec{k}_s) \rd \mu_s(C_1,\ldots,C_s)\\
& = & a_0 + \sum_{i=1}^{q^{m-1}-1} a_i  \int_{(\ZZ_q^{\NN \times \NN})^s} \walb_i(C_1^\top\vec{k}_1+\cdots +C_s^\top \vec{k}_s) \rd \mu_s(C_1,\ldots,C_s)\\ 
& = & \frac{1}{q^{m-1}} + \sum_{i=1}^{q^{m-1}-1} a_i \prod_{j=1}^s  \int_{\ZZ_q^{\NN \times \NN}} \walb_{i}(C_j^\top \vec{k}_j) \rd \mu(C_j)\\
& = & \frac{1}{q^{m-1}},
\end{eqnarray*}
according to Lemma~\ref{le1}, since at least one of the $\vec{k}_j$ is different from $\vec{0}$.
\end{proof}

For $\bsk=(k_1,\ldots,k_s) \in \NN_0^s$ and $\bsell=(\ell_1,\ldots,\ell_s) \in \NN_0^s$ we say that $\bsk$ and $\bsell$ are strongly dependent over $\ZZ_q$ if there is a $c \in \ZZ_q\setminus\{0\}$ such that $$\vec{k}_i= c \vec{\ell}_i \ \ \mbox{ for all }\ \ i=1,\ldots,s.$$

\begin{lemma}\label{le3a}
For $\bsk=(k_1,\ldots,k_s) \in \NN_0^s$ and $\bsell=(\ell_1,\ldots,\ell_s) \in \NN_0^s$ we have $$\prod_{u=1}^s \int_{\ZZ_q^{\NN \times \NN}} \walb_i(C_u^\top \vec{k}_u) \walb_j(C_u^\top \vec{\ell}_u) \rd \mu(C_u)=1$$ if and only if  we are in one of the following cases:
\begin{enumerate}
\item $i=j=0$; or
\item $k_u=\ell_u=0$ for all $u=1,\ldots,s$; or
\item $j=0$ and $k_u=0$ for all $u=1,\ldots,s$; or
\item $i=0$ and $\ell_u=0$ for all $u=1,\ldots,s$; or
\item $\vec{j}=c \vec{i}$ and $\vec{k}_u = c \vec{\ell}_u$ for all $u=1,\ldots,s$ for some $c \in \ZZ_q\setminus\{0\}$. 
\end{enumerate}
In all other cases the above product equals 0.
\end{lemma}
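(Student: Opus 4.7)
My approach is direct computation, exploiting the linear dependence of $C_u^\top \vec{k}_u$ on the entries of $C_u$ together with the independence of those entries under $\mu$. First I would use the definition of the Walsh functions to write $(C_u^\top\vec k_u)_a = \sum_b (C_u)_{b,a}(k_u)_b$, and similarly for $\vec\ell_u$, obtaining
$$\walb_i(C_u^\top \vec{k}_u)\,\walb_j(C_u^\top \vec{\ell}_u) = \omega_q^{\sum_{a,b} (C_u)_{b,a}\bigl[\,i_a (k_u)_b\,\pm\, j_a (\ell_u)_b\,\bigr]},$$
where the sign depends on the conjugation convention already used in Lemma~\ref{le1}(\ref{ass3}). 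Because the entries $(C_u)_{b,a}$ are i.i.d.\ uniform on $\ZZ_q$ under $\mu$, integrating over $C_u$ splits into a product of elementary one-variable character sums $q^{-1}\sum_{c\in\ZZ_q}\omega_q^{c\gamma^{(u)}_{a,b}}$, each equal to $1$ if $\gamma^{(u)}_{a,b}:=i_a (k_u)_b\pm j_a (\ell_u)_b\equiv 0\pmod q$ and $0$ otherwise (exactly as in the proof of Lemma~\ref{le2}).

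Consequently each factor in the $s$-fold product is itself $0$ or $1$, and the $u$-th factor equals $1$ iff the rank-$\le 1$ outer-product identity
$$\vec{k}_u\,\vec{i}^\top \;=\; \vec{\ell}_u\,\vec{j}^\top \qquad (\text{over }\ZZ_q)$$
holds. Hence the full product equals $1$ iff this identity holds simultaneously for every $u=1,\ldots,s$. The proof is then reduced to a finite case analysis on when such a system of rank-one identities is satisfied.

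For this analysis I would split on which of $\vec i,\vec j$ vanish. If $\vec i=\vec j=\vec 0$, the identity is automatic, giving case~1. If exactly one of $\vec i,\vec j$ vanishes, the identity forces the corresponding family $\vec\ell_u$ (respectively $\vec k_u$) to be identically zero, yielding cases~3 and~4. If both $\vec i,\vec j$ are nonzero, then either all $\vec k_u,\vec\ell_u$ vanish (case~2), or there exists some $u_0$ with $(\vec k_{u_0},\vec\ell_{u_0})\ne(\vec 0,\vec 0)$, in which case a rank argument on the outer-product identity forces $\vec j$ to be a nonzero scalar multiple of $\vec i$ and $\vec k_{u_0}$ to be a correspondingly tied scalar multiple of $\vec\ell_{u_0}$, which is case~5. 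The step I expect to require the most care is the \emph{synchronization} of the scalar across different indices: one must verify that a single $c\in\ZZ_q\setminus\{0\}$ serves every $u$, using that $c$ is uniquely determined by $\vec j=c\vec i$ (which requires both vectors to be nonzero) and then checking that those $u$ with $\vec k_u=\vec\ell_u=\vec 0$ fit the pattern trivially. A final routine check that each of the five listed cases does make every coefficient $\gamma^{(u)}_{a,b}$ vanish mod $q$ completes the ``if'' direction.
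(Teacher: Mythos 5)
Your proposal follows essentially the same route as the paper: expand the product of Walsh values into a product over matrix entries of $C_u$, observe that integrating an i.i.d.~uniform entry yields the indicator of a linear congruence, conclude that the $u$th factor equals $1$ exactly when $i_a (k_u)_b + j_a (\ell_u)_b \equiv 0 \pmod q$ for all $a,b$, and finish with a case analysis in which, as you correctly flag, the only delicate point is synchronizing the scalar $c$ across the indices $u$. Your caution about the sign is in fact warranted: since there is no complex conjugation in the integrand, the per-$u$ rank-one condition is $\vec k_u \vec i^\top + \vec\ell_u\vec j^\top = 0$ rather than $\vec k_u \vec i^\top = \vec\ell_u\vec j^\top$, so case~5 as printed should carry an extra minus sign (e.g.\ $\vec j = -c\vec i$ together with $\vec k_u = c\vec\ell_u$), a small discrepancy in the paper's statement that does not affect how Lemma~\ref{le3b} invokes it.
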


\begin{proof}
The assertion is clear whenever $i=j=0$, this is item {\it 1.} Let 
\begin{eqnarray*}
\vec{k}_u & = & (k_{u,0},k_{u,1},\ldots)^\top\\
\vec{\ell}_u & = & (\ell_{u,0},\ell_{u,1},\ldots)^\top\\
\vec{i} & = & (i_0,i_1,\ldots)^\top\\
\vec{j} & = & (j_0,j_1,\ldots)^\top 
\end{eqnarray*}
be the $q$-adic digit vectors of $k_u,\ell_u, i$ and $j$, respectively. By the definition of Walsh functions it easily follows that $$\walb_i(C_u^\top \vec{k}_u) \walb_j(C_u^\top \vec{\ell}_u) = \prod_{a,b=0}^{\infty}\omega_b^{c_{u,b+1,a+1}(k_{u,a} i_b+\ell_{u,a}j_b)},$$ where $c_{u,a,b}$ is the element in the $a$th row and $b$th column of the matrix $C_u$. Note that this product is in fact a finite product since the components of the vectors $\vec{k}_u,\vec{l}_u,\vec{i},\vec{j}$ become eventually 0.

Hence the product from the lemma is 1 if and only if $$k_{u,a} i_b+\ell_{u,a} j_b=0 \ \ \ \mbox{ for all }\ u=1,\ldots ,s \ \mbox{ and }\ a,b \in \NN_0$$ and it is 0 in all other cases.  

If $i$ and $j$ not both are zero, then we may assume without loss of generality that there is a $b$ such that $i_b \not=0$. Hence $$k_{u,a}=(- i_b^{-1} j_b) \ell_{u,a} \ \ \ \mbox{ for all }\ a \in \NN_0 \ \mbox{ and } \ u=1,\ldots,s.$$ With $c:=-i_b^{-1} j_b$ we obtain $\vec{k}_u=c \vec{\ell}_u$ for all $u=1,\ldots,s$.

Further, if $i_b=0$ for some $b$, then $$\ell_{u,a} j_b=0 \ \ \ \mbox{ for all } \ \ a \in \NN_0 \ \mbox{ and }\ u=1,\ldots,s$$ and hence $j_b=0$ or $\vec{l}_u=\vec{0}$ for all $u=1,\ldots,s$. To summarize: If not all $\vec{\ell}_u$ are equal to $\vec{0}$, then $i_b=0$ implies $j_b=0$ and therefore $i_b c =j_b$ holds for all $b \in \NN_0$, i.e., $$\vec{j}=c \vec{i}.$$ This proves item {\it 5.}

If the $\vec{\ell}_u$ are all equal to $\vec{0}$, then the product reduces to $$\prod_{u=1}^s \int_{\ZZ_q^{\NN \times \NN}} \walb_i(C_u^\top \vec{k}_u) \rd \mu(C_u)$$ and this is equal to one, iff $i=0$ (item {\it 4.}) or $\vec{k}_u=\vec{0}$ for all $u=1,\ldots,s$ (item {\it 2.}). Item {\it 3.} follows in the same way as item {\it 4.}
\end{proof}


\begin{lemma}\label{le3b}
Let $\overline{P} \subseteq \NN^s$. For $(k_1,\ldots,k_s) \in \overline{P}$ with $\len(k_j)=r_j$ for $j=1,\ldots,s$ let $\beta_1(k_1),\ldots,\beta_s(k_s) \in \NN_0$ with $\beta_j(k_j)=0$ or $\len(\beta_j(k_j))> \len(k_j)$ for all $j=1,\ldots ,s$ but not all of them equal to zero. Let \begin{eqnarray*}
\widetilde{M} & := & \{ (C_1,\ldots,C_s)\in (\ZZ_q^{\NN \times \NN})^s \ : \ \nu(C_1^\top \vec{k}_1+\cdots +C_s^\top \vec{k}_s) \le -(r_1+\cdots +r_s), \\
& & \hspace{0.5cm} \nu(C_1^\top(\vec{k}_1+\vec{\beta}_1(k_1))+\cdots +C_s^\top(\vec{k}_s+\vec{\beta}_s(k_s))) \le - \lfloor (r_1+\cdots +r_s)/2 \rfloor\\
& & \hspace{0.5cm} \mbox{for infinitely many } \ (k_1,\ldots,k_s) \in \overline{P}\}.
\end{eqnarray*}
Then we have $$ \mu_s(\widetilde{M})=0.$$
\end{lemma}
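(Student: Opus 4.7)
The natural strategy is a Borel--Cantelli argument. For each $(k_1,\ldots,k_s) \in \overline{P}$ with $\len(k_j)=r_j$, set $m := r_1+\cdots+r_s$ and $m' := \lfloor m/2 \rfloor$, and let $A(k_1,\ldots,k_s)$ denote the set of $(C_1,\ldots,C_s)\in(\ZZ_q^{\NN\times\NN})^s$ for which both inequalities in the definition of $\widetilde{M}$ hold. Since $\widetilde{M}$ is the limit superior of the events $A(k_1,\ldots,k_s)$ over $(k_1,\ldots,k_s)\in\overline{P}$, it suffices to show that $\sum_{(k_1,\ldots,k_s)\in\overline{P}} \mu_s(A(k_1,\ldots,k_s)) < \infty$.

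To estimate this joint probability, I would identify each digit vector $\vec{b}\in(\ZZ_q^\NN)^\top$ with the real number in $[0,1)$ having that $q$-adic expansion and observe that the event $\nu(\vec{b})\le -m$ corresponds to $\vec{b}\in[0,q^{-(m-1)})$. Writing the joint indicator as a product of two such characteristic functions, expanding both by their finite Walsh series in base $q$ as in \cite[Lemma~3.9]{DP10} (with constant coefficients $a_0=q^{-(m-1)}$ and $b_0=q^{-(m'-1)}$), and invoking Lemma~\ref{le1}.\ref{ass1} together with the product structure of $\mu_s$ yields
\begin{equation*}
\mu_s(A(k_1,\ldots,k_s))=\sum_{i=0}^{q^{m-1}-1}\sum_{j=0}^{q^{m'-1}-1} a_i b_j \prod_{u=1}^s \int_{\ZZ_q^{\NN\times\NN}} \walb_i(C_u^\top \vec{k}_u)\,\walb_j(C_u^\top(\vec{k}_u+\vec{\beta}_u(k_u)))\,\rd\mu(C_u).
\end{equation*}
Lemma~\ref{le3a} classifies exactly when each inner product equals $1$; my claim is that in the present setting only the diagonal term $i=j=0$ contributes.

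The crux of the argument is this case analysis. Since $(k_1,\ldots,k_s)\in\overline{P}\subseteq\NN^s$, every $\vec{k}_u\neq\vec{0}$, which immediately rules out items 2, 3 and 4 of Lemma~\ref{le3a} (applied with $\vec{\ell}_u:=\vec{k}_u+\vec{\beta}_u(k_u)$). For item 5 the requirement $\vec{k}_u = c(\vec{k}_u+\vec{\beta}_u(k_u))$ for all $u$ rewrites as $(1-c)\vec{k}_u = c\,\vec{\beta}_u(k_u)$. If $c=1$ then every $\vec{\beta}_u(k_u)=\vec{0}$, contradicting the hypothesis that at least one $\beta_j(k_j)$ is nonzero. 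If $c\neq 1$ then $\vec{\beta}_u(k_u)$ would be a nonzero scalar multiple of $\vec{k}_u$; but by the hypothesis on $\beta_u(k_u)$ the supports of $\vec{k}_u$ and $\vec{\beta}_u(k_u)$ lie in the disjoint index ranges $\{0,\ldots,r_u\}$ and $\{r_u+1,r_u+2,\ldots\}$ respectively, so this is impossible unless $\vec{k}_u=\vec{0}$, again a contradiction. Hence only item 1 survives and
\begin{equation*}
\mu_s(A(k_1,\ldots,k_s))= a_0 b_0 = q^{-(m-1)}\,q^{-(m'-1)} \le q^{5/2}\,q^{-3m/2}.
\end{equation*}

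The final step is a routine counting argument. Grouping $(k_1,\ldots,k_s)\in\NN^s$ by $m=r_1+\cdots+r_s$, there are $O(m^{s-1})$ compositions $(r_1,\ldots,r_s)$ with $r_j\ge 0$, and $(q-1)^s q^m$ tuples of $k_j$'s with the prescribed lengths, so
\begin{equation*}
\sum_{(k_1,\ldots,k_s)\in\overline{P}} \mu_s(A(k_1,\ldots,k_s)) \le C(q,s)\sum_{m \ge 0} m^{s-1} q^{m}\,q^{-3m/2} = C(q,s)\sum_{m\ge 0} m^{s-1} q^{-m/2} < \infty,
\end{equation*}
and the Borel--Cantelli lemma yields $\mu_s(\widetilde{M})=0$. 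The main obstacle is the case analysis of Lemma~\ref{le3a} above: it is precisely the disjoint-support assumption $\len(\beta_j(k_j))>\len(k_j)$ that prevents any non-trivial Walsh cross-terms from surviving, and without it the above bound would be destroyed.
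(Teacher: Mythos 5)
Your proof follows essentially the same route as the paper's: express the joint event through characteristic functions of $q$-adic intervals, expand in finite Walsh series as in \cite[Lemma~3.9]{DP10}, invoke Lemma~\ref{le3a} to kill all cross-terms, and finish with Borel--Cantelli. You spell out more of the bookkeeping (the explicit ruling-out of items 2--4 of Lemma~\ref{le3a}, and the final counting of tuples), which the paper leaves implicit, but the structure is the same.

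One step of your case analysis is not quite right as stated. To rule out item 5 of Lemma~\ref{le3a} when $c\neq 1$, you claim that the supports of $\vec{k}_u$ and $\vec{\beta}_u(k_u)$ lie in the \emph{disjoint} index ranges $\{0,\ldots,r_u\}$ and $\{r_u+1,r_u+2,\ldots\}$. The hypothesis $\len(\beta_u(k_u))>\len(k_u)$ only controls the position of the \emph{highest} nonzero digit of $\beta_u(k_u)$; it says nothing about its lower digits, so the supports need not be disjoint (e.g.\ $k_u=q^2+1$, $\beta_u=q^3+1$). The conclusion nonetheless holds by a slightly different observation, which is the one the paper uses: if $\vec{k}_u=c\,(\vec{k}_u+\vec{\beta}_u(k_u))$ for some $c\in\ZZ_q\setminus\{0\}$, then since multiplying a digit vector by a nonzero scalar in $\ZZ_q$ does not change the set of positions carrying a nonzero digit, we would have $\len(k_u)=\len(k_u\oplus\beta_u(k_u))$; but the hypothesis forces $\len(k_u\oplus\beta_u(k_u))=\len(\beta_u(k_u))>\len(k_u)$ for any $u$ with $\beta_u(k_u)\neq 0$, and at least one such $u$ exists. (For $u$ with $\beta_u(k_u)=0$ you already correctly get $c=1$ from $(1-c)\vec k_u=\vec 0$ and $k_u\neq 0$.) With this replacement the argument is complete and matches the paper's.
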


\begin{proof}
For given $k_1,\ldots,k_s \in \NN$ not all $k_j=1$ let 
$$M_1(k_1,\ldots,k_s) := \{(C_1,\ldots,C_s)\, : \, \nu(C_1^\top \vec{k}_1+\cdots +C_s^\top \vec{k}_s) \le - (r_1+\cdots +r_s)\}$$ and 
\begin{eqnarray*}
M_2(k_1,\ldots,k_s) & := & \bigg\{(C_1,\ldots,C_s)\, : \\
&& \hspace{0.5cm} \nu(C_1^\top (\vec{k}_1+\vec{\beta}_1(k_1))+\cdots +C_s^\top(\vec{k}_s+\vec{\beta}_s(k_s))) \le - \left\lfloor \frac{r_1+\cdots +r_s}{2}\right\rfloor\bigg\}.
\end{eqnarray*}  
We use the notation $m:=r_1+\cdots+r_s$. Let $\chi_1$ be the characteristic function of the interval $[0,q^{-(m-1)})$ and let $\chi_2$ be the characteristic function of the interval $[0,q^{-(\lfloor m/2 \rfloor-1)})$. Again by \cite[Lemma~3.9]{DP10} we have finite Walsh series representations for $\chi_1$ and $\chi_2$ in base $q$ given by
$$\chi_1=\sum_{i=0}^{q^{m-1}-1} a_i^{(1)} \ \walb_i \ \ \mbox{ and }\ \ \chi_2=\sum_{i=0}^{q^{\lfloor m/2 \rfloor -1} -1} a_i^{(2)}\  \walb_i$$ with $$a_0^{(1)}=\frac{1}{q^{m-1}} \ \ \ \mbox{ and }\ \ \ a_0^{(2)}=\frac{1}{q^{\left\lfloor m/2\right\rfloor -1}}.$$ Now we have
\begin{eqnarray*}
\lefteqn{\mu_s( M_1(k_1,\ldots,k_s) \cap M_2(k_1,\ldots,k_s) )}\\
& = & \int_{(\ZZ_q^{\NN \times \NN})^s} \chi_1(C_1^\top \vec{k}_1+\cdots +C_s^\top \vec{k}_s) \\
&& \hspace{4cm}\times \chi_2(C_1^\top(\vec{k}_1+\vec{\beta}_1(k_1))+\cdots +C_s^\top(\vec{k}_s+\vec{\beta}_s(k_s))) \rd \mu_s(C_1\ldots,C_s)\\
& = & a_0^{(1)} a_0^{(2)} + \sum_{i,j \atop (i,j)\not=(0,0)} a_i^{(1)} a_j^{(2)} \int_{(\ZZ_q^{\NN \times \NN})^s} \walb_i(C_1^\top \vec{k}_1+\cdots +C_s^\top \vec{k}_s)\\
& & \hspace{4cm} \times \walb_j(C_1^\top(\vec{k}_1+\vec{\beta}_1(k_1))+\cdots +C_s^\top(\vec{k}_s+\vec{\beta}_s(k_s))) \rd \mu_s(C_1, \ldots, C_s)\\
& = & a_0^{(1)} a_0^{(2)} + \sum_{i,j \atop (i,j)\not=(0,0)} a_i^{(1)} a_j^{(2)} \prod_{u=1}^s \int_{\ZZ_q^{\NN \times \NN}} \walb_i(C_u^\top \vec{k}_u)\walb_j(C_u^\top(\vec{k}_u+\vec{\beta}_u(k_u))) \rd \mu(C_u).
\end{eqnarray*}
Since $\len(k_u \oplus \beta_u(k_u)) > \len(k_u)$ for at least one $u$ it follows that $\vec{k}_u$ and $\vec{k}_u+\vec{\beta}_u(k_u)$ cannot be strongly dependent for all $u$. Hence it follows from Lemma~\ref{le3a} that $$\prod_{u=1}^s \int_{\ZZ_q^{\NN \times \NN}} \walb_i(C_u^\top \vec{k}_u)\walb_j(C_u(\vec{k}_u+\vec{\beta}_u(k_u))) \rd \mu(C_u)=0.$$
So $$\mu_s( M_1(k_1,\ldots,k_s) \cap M_2(k_1,\ldots,k_s) )=a_0^{(1)} a_0^{(2)}= \frac{1}{q^{m+ \lfloor m/2 \rfloor -2}}$$ and $$\mu_s(\widetilde{M}) \le \lim_{R \rightarrow \infty} \sum_{r_1,\ldots,r_s \atop r_1+\cdots+r_s \ge R} \sum_{k_1,\ldots,k_s \atop \len(k_i)=r_i} \frac{1}{q^{r_1+\cdots+r_s+ \lfloor (r_1+\cdots+r_s)/2 \rfloor -2}}=0.$$
\end{proof}

\begin{lemma}\label{le5}
Let $(\Omega, \mathcal{A},\mu)$ be a measure space and let $(A_n)_{n \ge 1}$ be a sequence of sets $A_n \in \mathcal{A}$ such that $$\sum_{n=1}^\infty \mu(A_n)=\infty.$$ Then the set $A$ of points falling in infinitely many sets $A_n$ is of measure $$\mu(A) \ge \limsup_{Q \rightarrow \infty} \frac{\left(\sum_{n=1}^Q \mu(A_n)\right)^2}{\sum_{n,m=1}^Q \mu(A_n \cap A_m)}.$$ 
\end{lemma}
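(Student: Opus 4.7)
The plan is to apply the Chung--Erd\H{o}s quantitative form of the second Borel--Cantelli lemma, whose proof rests on a single application of the Cauchy--Schwarz inequality to indicator functions. Throughout I tacitly assume $\mu(\bigcup_{n}A_n)<\infty$, which is harmless in our setting since the lemma will be applied to the probability measure $\mu_s$.

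First, write $A=\bigcap_{N\ge 1}B_N$ with $B_N:=\bigcup_{n\ge N}A_n$. The sequence $(B_N)_{N\ge 1}$ is decreasing and $\mu(B_1)<\infty$, so by continuity of $\mu$ along decreasing sequences one has $\mu(A)=\lim_{N\to\infty}\mu(B_N)$. Thus it suffices to produce, for every fixed $N$, the asserted lower bound on $\mu(B_N)$.

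For $N\le Q$ set $B_{N,Q}:=\bigcup_{n=N}^{Q}A_n$ and $f_{N,Q}:=\sum_{n=N}^{Q}\mathbf{1}_{A_n}$. Since $f_{N,Q}$ is supported on $B_{N,Q}$, the Cauchy--Schwarz inequality applied in $L^{2}(\mu)$ to $f_{N,Q}$ and $\mathbf{1}_{B_{N,Q}}$ yields
\begin{equation*}
\left(\sum_{n=N}^{Q}\mu(A_n)\right)^{2} = \left(\int f_{N,Q}\,\mathbf{1}_{B_{N,Q}}\rd\mu\right)^{2} \le \mu(B_{N,Q})\int f_{N,Q}^{\,2}\rd\mu = \mu(B_{N,Q})\sum_{n,m=N}^{Q}\mu(A_n\cap A_m),
\end{equation*}
and hence
\begin{equation*}
\mu(B_N)\ge\mu(B_{N,Q})\ge\frac{\left(\sum_{n=N}^{Q}\mu(A_n)\right)^{2}}{\sum_{n,m=N}^{Q}\mu(A_n\cap A_m)}.
\end{equation*}

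Now let $Q\to\infty$. With $C_N:=\sum_{n=1}^{N-1}\mu(A_n)<\infty$, one has $\sum_{n=N}^{Q}\mu(A_n)=\sum_{n=1}^{Q}\mu(A_n)-C_N$ and $\sum_{n,m=N}^{Q}\mu(A_n\cap A_m)\le\sum_{n,m=1}^{Q}\mu(A_n\cap A_m)$. Because $\sum_{n=1}^{\infty}\mu(A_n)=\infty$ the truncation $C_N$ is absorbed in the limsup (since $(x-C_N)^{2}/x^{2}\to 1$ when $x\to\infty$), giving
\begin{equation*}
\mu(B_N)\ge\limsup_{Q\to\infty}\frac{\left(\sum_{n=1}^{Q}\mu(A_n)\right)^{2}}{\sum_{n,m=1}^{Q}\mu(A_n\cap A_m)}.
\end{equation*}
Letting $N\to\infty$ finishes the proof. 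The only subtle point is this last asymptotic absorption of $C_N$, which is routine; the entire probabilistic content sits in the single Cauchy--Schwarz step.
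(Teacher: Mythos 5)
Your proof is correct, and it is the standard Chung--Erd\H{o}s argument via Cauchy--Schwarz; the paper itself does not prove the lemma but refers to Sprind\v{z}uk's book, where essentially this argument appears. You are also right to flag the hidden hypothesis $\mu\bigl(\bigcup_n A_n\bigr)<\infty$: without it the statement as written is actually false (take $\Omega=\RR$ with Lebesgue measure and $A_n=[n,n+1)$, so that $A=\emptyset$ while the right-hand side diverges), but it holds harmlessly here since the lemma is only applied to the probability measure $\mu_s$. Two small points worth making explicit in the final absorption step: the ratio $S_Q^2/D_Q$ (with $S_Q=\sum_{n=1}^Q\mu(A_n)$, $D_Q=\sum_{n,m=1}^Q\mu(A_n\cap A_m)$) is bounded above by $\mu(B_1)<\infty$, again by the same Cauchy--Schwarz step with $N=1$, so its $\limsup$ is finite; and since $(S_Q-C_N)^2/S_Q^2\to 1$, one has $\limsup_Q (S_Q-C_N)^2/D_Q=\limsup_Q S_Q^2/D_Q$, which is exactly the equality you need before letting $N\to\infty$.
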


\begin{proof}
This is \cite[Lemma~5 in Chapter I]{Sp}. A proof can be found there. 
\end{proof}

\begin{lemma}\label{le4}
Let $P \subseteq \NN^s$ such that it does not contain any two strongly dependent elements $\bsk$ and $\bsell$. Let 
\begin{eqnarray*}
\overline{M}=\{(C_1,\ldots,C_s)\in (\ZZ_q^{\NN \times \NN})^s & : & \nu(C_1^\top \vec{k}_1+\cdots +C_s^\top \vec{k}_s) \le -F(r_1,\ldots,r_s)\\
& & \mbox{ for infinitely many } (k_1,\ldots,k_s)\in P\},
\end{eqnarray*}
where $r_i=\len(k_i)$, and $F: \NN_0^s \rightarrow \NN$ is such that $$\sum_{(k_1,\ldots,k_s) \in P} \frac{1}{q^{F(r_1,\ldots,r_s)}}=\infty.$$ Then $$\mu_s(\overline{M})=1.$$
\end{lemma}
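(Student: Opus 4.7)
The plan is to apply Lemma~\ref{le5} (a divergent Borel--Cantelli lemma) to the events
$$A_{\bsk} := \{(C_1,\ldots,C_s) \in (\ZZ_q^{\NN \times \NN})^s \, : \, \nu(C_1^\top \vec{k}_1 + \cdots + C_s^\top \vec{k}_s) \le -F(r_1,\ldots,r_s)\}$$
indexed by $\bsk=(k_1,\ldots,k_s) \in P$ (where $r_j = \len(k_j)$). Fix any enumeration of the countable set $P$; then $\overline{M}$ is exactly the set of $(C_1,\ldots,C_s)$ lying in infinitely many $A_{\bsk}$.

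First, I would use Lemma~\ref{le2} to compute $\mu_s(A_{\bsk}) = q^{1-F(r_1,\ldots,r_s)}$, which is legitimate because $\bsk \in P \subseteq \NN^s$ has all coordinates nonzero. Summing over $\bsk \in P$ and using the divergence hypothesis yields $\sum_{\bsk \in P}\mu_s(A_{\bsk}) = q \sum_{\bsk\in P} q^{-F(r_1,\ldots,r_s)} = \infty$, which gives the numerator of the bound in Lemma~\ref{le5}.

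The key step, and the one I expect to require the most care, is establishing pairwise independence: for any two distinct $\bsk, \bsell \in P$,
$$\mu_s(A_{\bsk} \cap A_{\bsell}) = \mu_s(A_{\bsk})\, \mu_s(A_{\bsell}).$$
I would expand each of the two defining indicator functions as a finite Walsh series exactly as in the proof of Lemma~\ref{le3b}, so that $\mu_s(A_{\bsk} \cap A_{\bsell})$ equals $a_0^{(1)}a_0^{(2)}$ plus cross terms of the form
$$a_i^{(1)} a_j^{(2)} \prod_{u=1}^s \int_{\ZZ_q^{\NN\times\NN}} \walb_i(C_u^\top \vec{k}_u)\,\walb_j(C_u^\top \vec{\ell}_u)\rd \mu(C_u)$$
with $(i,j) \neq (0,0)$. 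By Lemma~\ref{le3a}, each such product integral vanishes unless one of the five listed cases holds. Cases 2, 3, 4 are ruled out because all components of $\bsk$ and $\bsell$ lie in $\NN$, so no $\vec{k}_u$ or $\vec{\ell}_u$ is zero; case 5 is ruled out by the hypothesis that $P$ contains no two strongly dependent elements; and case 1 is excluded by $(i,j)\neq (0,0)$. Hence every cross term vanishes and $\mu_s(A_{\bsk}\cap A_{\bsell}) = a_0^{(1)} a_0^{(2)} = \mu_s(A_{\bsk})\mu_s(A_{\bsell})$.

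With pairwise independence in hand, the conclusion is routine: writing $S_Q := \sum_{n=1}^{Q} \mu_s(A_{\bsk_n})$ along the chosen enumeration $\bsk_1,\bsk_2,\ldots$ of $P$, we split
$$\sum_{n,m=1}^Q \mu_s(A_{\bsk_n}\cap A_{\bsk_m}) = \sum_{n=1}^Q \mu_s(A_{\bsk_n}) + \sum_{n\neq m} \mu_s(A_{\bsk_n})\mu_s(A_{\bsk_m}) \le S_Q + S_Q^2.$$
Lemma~\ref{le5} then gives
$$\mu_s(\overline{M}) \ge \limsup_{Q\to\infty} \frac{S_Q^2}{S_Q + S_Q^2} = 1,$$
since $S_Q \to \infty$ by Step~1. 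Hence $\mu_s(\overline{M}) = 1$, as claimed.
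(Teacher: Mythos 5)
Your proof is correct and follows essentially the same route as the paper: compute $\mu_s(A_{\bsk})$ via Lemma~\ref{le2}, establish pairwise independence through the Walsh expansion and Lemma~\ref{le3a} (as in the proof of Lemma~\ref{le3b}), and feed this into the divergent Borel--Cantelli lemma of Lemma~\ref{le5}. The only cosmetic difference is in the final step, where you bound the denominator above by $S_Q + S_Q^2$ rather than writing it exactly as $S_Q^2 + S_Q - \sum a_n^2$ as the paper does; both give the limit $1$.
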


\begin{proof}
For given  $(k_1,\ldots,k_s) \in P$ let $$M(k_1,\ldots,k_s):=\{(C_1,\ldots,C_s) \in (\ZZ_q^{\NN \times \NN})^s \, : \, \nu(C_1^\top \vec{k}_1+\cdots+C_s^\top \vec{k}_s) \le -F(r_1,\ldots,r_s)\}.$$  With the same proof as for Lemma~\ref{le2} we have $$\mu_s(M(k_1,\ldots,k_s))=\frac{1}{q^{F(r_1,\ldots,r_s)-1}}$$ and hence $$\sum_{(k_1,\ldots,k_s) \in P} \mu_s(M(k_1,\ldots,k_s)) = q \sum_{(k_1,\ldots,k_s) \in P} \frac{1}{q^{F(r_1,\ldots,r_s)}}=\infty.$$ Now we can use Lemma~\ref{le5} to obtain 
\begin{equation}\label{ssp1}
\mu_s(\overline{M}) \ge \lim_{R \rightarrow \infty} \frac{\left( \sum\limits_{(k_1,\ldots,k_s)\in P\atop r_1+\cdots +r_s \le R} \mu_s(M(k_1,\ldots,k_s))\right)^2}{\sum\limits_{(k_1,\ldots,k_s)\in P \atop {(\ell_1,\ldots ,\ell_s)\in P \atop \sum \len(k_j), \sum \len(\ell_j) \le R}} \mu_s(M(k_1,\ldots,k_s) \cap M(\ell_1,\ldots,\ell_s))}.
\end{equation}

Proceeding in the same way as in the proof of Lemma~\ref{le3b} we obtain $$\mu_s(M(k_1,\ldots,k_s) \cap M(\ell_1,\ldots, \ell_s))=\mu_s(M(k_1,\ldots,k_s)) \mu_s(M(\ell_1,\ldots,\ell_s))$$ provided that $(k_1,\ldots,k_s)$ and $(\ell_1,\ldots,\ell_s)$ are not strongly dependent. But this cannot happen by the definition of the set $P$.

If we denote the summands of the sum in the denominator of \eqref{ssp1} in any order by $a_1,a_2,\ldots,a_Q$, then the expression on the right hand side of \eqref{ssp1} can be written as 
\begin{equation}\label{lim1}
\lim_{Q \rightarrow \infty} \frac{\left(\sum_{k=1}^Q a_k\right)^2}{\left(\sum_{k=1}^Q a_k\right)^2+\sum_{k=1}^Q a_k - \sum_{k=1}^Q a_k^2}.
\end{equation}
Since $0 \le a_k \le 1$ for all $k$, and since $\lim_{Q \rightarrow \infty} \sum_{k=1}^Qa_k=\infty$ the limit in \eqref{lim1} is one and the result follows.
\end{proof}

We need some notation. For $m \in \NN_0$ let 
\begin{eqnarray*}
\QQ(q^m) & = & \{x=r q^{-m} \in [0,1) \, : \, r=0,\ldots,q^m -1\},\\
\QQ^s(q^m) & = & \{\bsx=(x_1,\ldots,x_s) \in [0,1)^s \, : x_j \in \QQ(q^m) \ \mbox{ for }\  j=1,\ldots,s\}.
\end{eqnarray*}

\begin{lemma}\label{le6}
Let $\cS(C_1,\ldots,C_s)$ be a digital sequence over $\ZZ_q$ with generating matrices $C_1,\ldots,C_s$. Let $N \in \NN$ with $q$-adic expansion $N=N_{m-1}q^{m-1}+\cdots +N_1 q + N_0$ and let $\bsx=(x_1,\ldots,x_s) \in \QQ^s(q^m)$.  Then we have
\begin{eqnarray*}
D(\bsx,N) =  \sum_{k_1,\ldots, k_s=0\atop (k_1,\ldots,k_s)\not=(0,\ldots ,0)}^{q^m-1} \left(\prod_{j=1}^s J_{k_j}(x_j)\right) G(N,\vec{b}(k_1,\ldots,k_s)),  
\end{eqnarray*}
where for $k=\kappa q^{a-1}+k'$ with $a \in \NN$, $1 \le
\kappa <q$ and $0 \le k' < q^{a-1}$ we have 
\begin{eqnarray}\label{valJk}
J_k(x) & = & \frac{1}{q^a} \Bigg(\frac{1}{1-\omega_q^{-\kappa}}\
\overline{\walb_{k'}(x)} + \left(\frac{1}{2} +
\frac{1}{\omega_q^{-\kappa}-1}\right) \ \overline{\walb_k(x)} \\ &&
\qquad + \sum_{c=1}^{m-a} \sum_{l = 1}^{q-1} \frac{1}{q^c
(\omega_q^l -1)} \ \overline{\walb_{l q^{a+c-1}+k}(x)}- \frac{1}{2 q^{m-a}}\ \overline{\walb_k(x)} \Bigg)
\end{eqnarray}
and for $k = 0$ we have
\begin{equation}\label{valJ0}
J_0(x) = \frac{1}{2} + \sum_{c=1}^m \sum_{l=1}^{q-1}
\frac{1}{q^c (\omega_q^l-1)} \ \overline{\walb_{l q^{c-1}}(x)}-\frac{1}{2 q^m},\nonumber
\end{equation} 
and where $$G(N,\vec{b}(k_1,\ldots,k_s))=\left[\omega_q^{b_{w+1} N_{w+1}+\cdots +b_{m-1} N_{m-1}} q^w \left(\frac{\omega_q^{b_w N_w}-1}{\omega_q^{b_w}-1} + \omega_q^{b_w N_w} \left\{\frac{N}{q^w}\right\}\right)\right]$$ with $$\vec{b}=\vec{b}(k_1,\ldots,k_s)=(b_0,b_1,b_2,\ldots)^\top =\sum_{j=1}^s C_j^{\top} \vec{k}_j$$ and $w =-\nu(\vec{b})$. However, if $w \ge m$, then we put $G(N,\vec{b})=N$.
\end{lemma}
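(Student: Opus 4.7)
The plan is to expand each univariate indicator $\chi_{[0,x_j)}$ in a finite Walsh series, substitute into the defining sum for $D(\bsx,N)$, invoke the digital structure of $\cS$ to convert Walsh values at the sequence points into an additive character on $\vec n$, and finally evaluate the resulting partial character sum in closed form.

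First I would observe that for $x_j\in\QQ(q^m)$ the function $t\mapsto\chi_{[0,x_j)}(t)$ is constant on each $q$-adic interval $[r/q^m,(r+1)/q^m)$, and hence admits an exact finite Walsh series $\chi_{[0,x_j)}(t)=\sum_{k=0}^{q^m-1}J_k(x_j)\,\walb_k(t)$ with coefficients $J_k(x_j)=\int_0^{x_j}\overline{\walb_k(t)}\rd t$. The closed forms \eqref{valJk} and \eqref{valJ0} arise by evaluating this integral via the $q$-adic decomposition of $[0,x_j)$ along the digits of $x_j$, with a case split according to the leading digit position $a$ of $k=\kappa q^{a-1}+k'$; geometric sums of $q$-th roots of unity then produce the stated expressions, and in particular one obtains $J_0(x_j)=x_j$ on $\QQ(q^m)$. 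Substituting into
\[
D(\bsx,N)=\sum_{n=0}^{N-1}\prod_{j=1}^{s}\chi_{[0,x_j)}(x_{n,j})-Nx_1\cdots x_s,
\]
expanding the product, and interchanging summations, the $(k_1,\ldots,k_s)=\bszero$ term contributes $N\prod_j J_0(x_j)=Nx_1\cdots x_s$, which cancels the subtracted volume, leaving a sum over $\bsk\neq\bszero$ with the desired outer factor $\prod_j J_{k_j}(x_j)$.

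It remains to identify the inner character sum $\sum_{n=0}^{N-1}\prod_j\walb_{k_j}(x_{n,j})$ with $G(N,\vec b(k_1,\ldots,k_s))$. The digital construction gives $\walb_k(x_{n,j})=\omega_q^{\vec k^{\top}(C_j\vec n)}=\omega_q^{(C_j^{\top}\vec k)^{\top}\vec n}$, so the inner sum equals $\sum_{n=0}^{N-1}\omega_q^{\vec b^{\top}\vec n}$ with $\vec b=\sum_j C_j^{\top}\vec k_j$. Setting $w=-\nu(\vec b)$, the character is trivial in the lowest $w$ digits of $\vec n$, producing an outer factor $q^w$; when $w\ge m$ this already yields the value $N$. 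For $w<m$, I would write $n=q^w n''+n'$ with $n'\in\{0,\ldots,q^w-1\}$, and decompose $\{0,\ldots,\lfloor N/q^w\rfloor-1\}$ lexicographically along the top digits of $N$. Because $b_w\neq 0$, every stage of the decomposition that leaves a lower digit (at positions $w+1,\ldots,m-2$) free contributes a vanishing factor $\sum_{e=0}^{q-1}\omega_q^{b_we}=0$, so only the stage fixing the upper digits to $N_{w+1},\ldots,N_{m-1}$ and summing the digit at position $w$ over $\{0,\ldots,N_w-1\}$ survives, yielding $\omega_q^{b_{w+1}N_{w+1}+\cdots+b_{m-1}N_{m-1}}(\omega_q^{b_wN_w}-1)/(\omega_q^{b_w}-1)$. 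The residual top block of length $N\bmod q^w=q^w\{N/q^w\}$ contributes the additive $\omega_q^{b_wN_w}\{N/q^w\}$ term to the bracket, assembling exactly into $G(N,\vec b)$. The principal computational step is verifying the closed forms \eqref{valJk} and \eqref{valJ0}; the algebra is routine but care is needed in matching the boundary contribution $-\tfrac{1}{2q^{m-a}}\overline{\walb_k(x)}$ from the last, shortest block of integration in the $q$-adic decomposition of $[0,x_j)$.
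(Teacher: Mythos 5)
Your proposal follows the paper's proof essentially step for step: expand each $\chi_{[0,x_j)}$ into its finite Walsh series (the paper cites \cite[Lemma~3.9 and Lemma~14.8]{DP10} for the coefficients $J_k$, which you would rederive), cancel the $\bsk=\bszero$ term against $Nx_1\cdots x_s$, convert $\prod_j\walb_{k_j}(x_{n,j})$ into the additive character $\omega_q^{\langle\vec b,\vec n\rangle}$ via the digital construction, and evaluate the partial geometric character sum by splitting $n$ at digit position $w=-\nu(\vec b)$ so that the upper full blocks vanish and only the block with fixed top digits and a residual tail of length $q^w\{N/q^w\}$ survives. This matches the paper's argument exactly.
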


\begin{proof}
Denote the $n$th element of $\cS(C_1,\ldots,C_s)$ by $\bsx_n=(x_{n,1},\ldots,x_{n,s})$. We have 
\begin{eqnarray*}
D(\bsx,N) = \sum_{n=0}^{N-1} \left(\prod_{j=1}^s \chi_{[0,x_j)} (x_{n,j}) - x_1 \cdots x_s\right).
\end{eqnarray*}
Since $x_j \in \QQ(q^m)$ for all $j=1,\ldots ,s$ it follows that the characteristic functions $\chi_{[0,x_j)}(x)$ have a finite Walsh-series representation of the form $$ \chi_{[0,x_j)}(x)= \sum_{k_j=0}^{q^m -1} J_{k_j}(x_j) \walb_{k_j}(x),$$ see \cite[Lemma~3.9]{DP10}, with Walsh coefficients $J_{k_j}(x_j)  =  \int_0^{x_j} \overline{\walb_k(t)} \rd t$. Hence we obtain
\begin{eqnarray*}
D(\bsx,N) & = &  \sum_{n=0}^{N-1} \left(\sum_{k_1,\ldots, k_s=0}^{q^m -1} \left( \prod_{j=1}^s J_{k_j}(x_j) \walb_{k_j}(x_{n,j})\right) - x_1 \cdots x_s\right)\\
& = & \sum_{n=0}^{N-1} \left(\sum_{k_1,\ldots, k_s=0\atop (k_1,\ldots,k_s)\not=(0,\ldots ,0)}^{q^m -1} \left( \prod_{j=1}^s J_{k_j}(x_j)\right) \walb_{\bsk}(\bsx_n)\right)\\
& = & \sum_{k_1,\ldots, k_s=0\atop (k_1,\ldots,k_s)\not=(0,\ldots ,0)}^{q^m -1} \left( \prod_{j=1}^s J_{k_j}(x_j)\right) \sum_{n=0}^{N-1}  \walb_{\bsk}(\bsx_n).
\end{eqnarray*}
According to \cite[Lemma~14.8]{DP10} the values of $J_{k_j}$ are of the form as given in \eqref{valJk} and \eqref{valJ0}, respectively. It remains to evaluate the sum $\sum_{n=0}^{N-1}  \walb_{\bsk}(\bsx_n)$. According to the construction of a digital sequence the $q$ adic digits of the $j$th component of the $n$th element $\bsx_n$ of the sequence are given by $\vec{x}_{n,j}=C_j \vec{n}$. Hence we have
\begin{eqnarray*}
\sum_{n=0}^{N-1}  \walb_{\bsk}(\bsx_n) & = & \sum_{n=0}^{N-1} \prod_{j=1}^s \walb_{k_j}(C_j \vec{n}_j)=\sum_{n=0}^{N-1} \prod_{j=1}^s \omega_b^{\langle \vec{k}_j,C_j \vec{n}\rangle}= \sum_{n=0}^{N-1} \omega_b^{\langle \sum_{j=1}^s C_j^{\top} \vec{k}_j,\vec{n}\rangle}\\
& = & \sum_{n=0}^{N-1} \omega_b^{\langle \vec{b},\vec{n}\rangle} = \sum_{n=0}^{N-1} \omega_q^{b_0 n_0+\cdots b_{m-1} n_{m-1}},
\end{eqnarray*}
where $\langle \cdot ,\cdot \rangle$ denotes the usual inner product and where $\vec{b}=\sum_{j=1}^s C_j^{\top} \vec{k}_j$.
Now we use the notation $N(w)=N_0+\cdots+N_w q^w$. Splitting up the last sum we obtain 
\begin{eqnarray*}
\sum_{n=0}^{N-1}  \walb_{\bsk}(\bsx_n) & = &  \sum_{n=0}^{N-1} \omega_q^{b_0 n_0+ \cdots +b_{m-1}n_{m-1}}  \\
& = &  \sum_{n=0}^{q^{w+1}(N_{w+1}+\cdots +N_{m-1}q^{m-w-2})-1}\omega_q^{n_w b_w}
\omega_q^{b_{w+1}n_{w+1}+\cdots +b_{m-1}n_{m-1}} \nonumber \\
&  & + \;\;  \sum_{n=0}^{N(w)-1}\omega_q^{n_w b_w}
\omega_q^{b_{w+1}N_{w+1}+\cdots +b_{m-1}N_{m-1}} \nonumber \\
& = & 0 + \omega_q^{b_{w+1}N_{w+1}+\cdots +b_{m-1}N_{m-1}}
\sum_{n=0}^{N(w)-1}\omega_q^{n_w b_w}. \nonumber 
\end{eqnarray*}
We study the last sum. We have 
\begin{eqnarray*}
\sum_{n=0}^{N(w)-1}\omega_q^{n_w b_w}
& = & \sum_{n=0}^{q^w-1} \omega_q^{0 b_w}+\sum_{n=q^w}^{2 q^w-1} \omega_q^{b_w}+\cdots +\sum_{n=(N_w-1)q^w}^{N_w q^w-1} \omega_q^{(N_w-1)b_w}+\sum_{n=N_wq^w}^{N(w)-1} \omega_q^{N_wb_w}\\
& = & q^w \sum_{k=0}^{N_w-1}\left(\omega_q^{b_w}\right)^k+(N(w)-N_w q^w) \omega_q^{N_wb_w}\\
& = & q^w \left(\frac{\omega_q^{b_w N_w}-1}{\omega_q^{b_w}-1} +\left(\frac{N(w)}{q^w}-N_w\right)\omega_q^{N_wb_w}\right)\\
& = & q^w\left(\frac{\omega_q^{b_w N_w}-1}{\omega_q^{b_w}-1}+\omega_q^{b_w N_w} \left\{\frac{N}{q^w}\right\}\right)
\end{eqnarray*}
and the result follows.
\end{proof}

\section{The proof of Theorem~\ref{th2}}

Let $N \in \NN$ with $q$-adic expansion $N=N_{m-1}q^{m-1}+\cdots +N_1 q + N_0$. We use the representation of $D(\bsx,N)$ given in Lemma~\ref{le6}. For any $\bsk^\ast=(k_1^\ast,\ldots,k_s^\ast) \in \NN^s$ with the property that each of the $k_i^\ast$ is of the form $$k_i^\ast=q^{a_i^\ast -1}+q^{a_i^\ast-2}+\ell_i^\ast$$ with some $a_i^\ast \ge 3$ and some $0 \le \ell_i^\ast < q^{a_i^\ast -2}$ we put
\begin{eqnarray*}
\Lambda & := & \Lambda(\bsk^\ast) := \sum_{\bsx \in \QQ^s(q^m)} D(\bsx,N) \walb_{\bsk^\ast}(\bsx)\\
& = & \sum_{k_1,\ldots,k_s =0 \atop (k_1,\ldots,k_s)\not=(0,\ldots,0)}^{q^m -1} \left[ \prod_{j=1}^s \sum_{x_j \in \QQ(q^m)} J_{k_j}(x_j) \walb_{k_j^\ast}(x_j) \right] G(N,\vec{b}(k_1,\ldots,k_s)).
\end{eqnarray*}

By the definition of the $J_k$ and by the orthonormality of Walsh functions (see Lemma~\ref{le1}) we have $$\theta(k):=\sum_{x \in \QQ(q^m)} J_k(x) \walb_{k^\ast}(x)=0$$ unless we are in one of the following three cases (with $k=\kappa q^{a-1}+k'$ and $k^\ast=q^{a^\ast -1}+(k^\ast)'$):
\begin{enumerate}
\item \label{cs1} $k$ is such that $k'=k^\ast$, i.e., $k=\kappa q^{a^\ast +c-1}+k^\ast$ for some $c \in \NN$ and $\kappa \in \{1,\ldots,q-1\}$. In this case we have  $$\theta(k)= \frac{1}{q^{a^\ast +c}} \frac{1}{1-\omega_q^{-\kappa}}.$$
\item \label{cs2} $k$ is such that $k=k^\ast$. In this case we have $$\theta(k)=\frac{1}{q^{a^\ast}}\left(\frac{1}{2}+\frac{1}{\omega_q -1}\right)-\frac{1}{2 q^m}.$$
\item \label{cs3} $k$ is such that $k=(k^\ast)'=q^{a^\ast-2}+l^\ast$. In this case we have $$\theta(k)=\frac{1}{q^{a^\ast}} \frac{1}{\omega_q-1}.$$
\end{enumerate}

We write $(k_j^\ast)'=:\widetilde{k}_j=q^{a_j^\ast -2}+\cdots$ (note that $k_j^\ast$ is uniquely determined by $\widetilde{k}_j$),
\begin{eqnarray*}
\beta_j(\widetilde{k}_j,0) & := & 0\\
\beta_j(\widetilde{k}_j,1) & := & q^{a_j^\ast -1}
\end{eqnarray*}
and for $t \in \NN_0$ and $u_j \in \{tq-t+2,\ldots,tq-t+q\}$ we put $$\beta_j(\widetilde{k}_j,u_j)=q^{a_j^\ast -1}+q^{a_j^\ast+t}(u_j-(tq-t+1)).$$ Then for $u_j \ge 2$ we have $$\widetilde{k}_j+\beta_j(\widetilde{k}_j,u_j) = (k_j^\ast)' + q^{a_j^\ast -1}+q^{a_j^\ast+t}(u_j-(tq-t+1))=k_j^\ast + q^{a_j^\ast +t} \kappa,$$ where $\kappa=u_j-(tq-t+1)$. Hence, according to Case~\ref{cs1}, we have $$|\theta(\widetilde{k}_j+\beta_j(\widetilde{k}_j,u_j)))|\le c_1(q) \frac{1}{q^{a_j^\ast +t+1}} \le c_1(q) \frac{1}{q^{a_j^\ast +u_j/q}} $$ for some $c_1(q)>0$. Similarly, $$\widetilde{k}_j+\beta_j(\widetilde{k}_j,0)) = (k_j^\ast)' $$ and hence, according to Case~\ref{cs3}, we have $$|\theta(\widetilde{k}_j+\beta_j(\widetilde{k}_j,0)))|\le c_2(q) \frac{1}{q^{a_j^\ast}}$$ for some $c_2(q)>0$, and $$\widetilde{k}_j+\beta_j(\widetilde{k}_j,1)) = (k_j^\ast)' +q^{a_j^\ast -1}=k_j^\ast$$ and hence, according to Case~\ref{cs2}, we have $$|\theta(\widetilde{k}_j+\beta_j(\widetilde{k}_j,1)))|\le c_3(q) \frac{1}{q^{a_j^\ast}}$$ for some $c_3(q)>0$. Summing up, for all $u_j \ge 0$ we have 
\begin{equation}\label{inequjtheta}
|\theta(\widetilde{k}_j+\beta_j(\widetilde{k}_j,u_j)))|\le c_4(q) \frac{1}{q^{a_j^\ast +u_j/q}}
\end{equation}
for some $c_4(q)>0$.

Now we have $$\Lambda=\sum_{u_1,\ldots,u_s \ge 0} \left[\prod_{j=1}^s \theta(\widetilde{k}_j+\beta_j(\widetilde{k}_j,u_j))\right] G(N,\vec{b}(\widetilde{k}_1+\beta_1(\widetilde{k}_1,u_1),\ldots,\widetilde{k}_s+\beta_s(\widetilde{k}_s,u_s)))$$ where the summation is over all $u_j$ with $\widetilde{k}_j+\beta_j(\widetilde{k}_j, u_j) < q^m$ for all $j=1,\ldots,s$. For any $J \in \NN$ we have
\begin{eqnarray}\label{defLambda}
|\Lambda| & \ge & \left|\left[\prod_{j=1}^s \theta(\widetilde{k}_j)\right] G(N,\vec{b}(\widetilde{k}_1,\ldots,\widetilde{k}_s))\right|\\
&&-\left|\sum_{0 \le u_1,\ldots,u_s \le J \atop (u_1,\ldots,u_s)\not=(0,\ldots,0)} \left[\prod_{j=1}^s \theta(\widetilde{k}_j+\beta_j(\widetilde{k}_j,u_j))\right] G(N,\vec{b}(\widetilde{k}_1+\beta_1(\widetilde{k}_1,u_1),\ldots,\widetilde{k}_s+\beta_s(\widetilde{k}_s,u_s)) \right| \nonumber \\
&&-\left|\sum_{u_1,\ldots,u_s \ge 0 \atop \exists j:\ u_j >J} \left[\prod_{j=1}^s \theta(\widetilde{k}_j+\beta_j(\widetilde{k}_j,u_j))\right] G(N,\vec{b}(\widetilde{k}_1+\beta_1(\widetilde{k}_1,u_1),\ldots,\widetilde{k}_s+\beta_s(\widetilde{k}_s,u_s)) \right|.\nonumber 
\end{eqnarray}

Note that $|G(N,\vec{b}(k_1,\ldots,k_s))| \le q N$ always. Therefore and using \eqref{inequjtheta}, for the last sum in \eqref{defLambda} we have 
\begin{eqnarray*}
\left| \Sigma \right| \le \frac{q N}{q^{a_1^\ast + \cdots + a_s^\ast}} \sum_{u_1,\ldots,u_s \ge 0 \atop \exists j:\ u_j >J} q^{-\frac{u_1}{q}-\cdots - \frac{u_s}{q}}\le c_5(q,s) \frac{N}{q^{a_1^\ast + \cdots + a_s^\ast}} \frac{1}{q^{J/q}},
\end{eqnarray*}
with some $c_5(q,s)>0$ depending only on $q$ and on $s$.\\

Let the function $F$ from Lemma~\ref{le4} be such that 
\begin{equation}\label{defF}
q^{F(r_1,\ldots,r_s)} = q^{r_1+\cdots+r_s} (r_1+\cdots +r_s)^s \log (r_1+\cdots+r_s)
\end{equation}
and let $P$ from Lemma~\ref{le4} be given by 
\begin{eqnarray}\label{defP}
P & = & \{(k_1,\ldots,k_s) \in \NN^s \ : \ (k_1,\ldots,k_s)\not=(1,\ldots,1),\  k_i=q^{a_i-1}+\ell_i \nonumber\\
& &\hspace{0.5cm} \mbox{ for some } a_i \in \NN \mbox{ and } 0 \le \ell_i < q^{a_i -1} \mbox{ for all } i=1,\ldots, s\}.
\end{eqnarray}
Note that in $P$ no two elements are strongly dependent since the leading digits of the $k_i$ are all 1.
With $r=\len(k_i)$ we have
\begin{eqnarray*}
\sum_{(k_1,\ldots,k_s)\in P} \frac{1}{q^{F(r_1,\ldots,r_s)}} & = & \sum_{a_1,\ldots,a_s=0\atop a_1+\cdots +a_s \not=0}^\infty \frac{1}{q^{F(a_1,\ldots,a_s)}} \sum_{\ell_1=0}^{q^{a_1}-1}\ldots \sum_{\ell_s=0}^{q^{a_s}-1} 1\\
& = &  \sum_{a_1,\ldots,a_s=0\atop a_1+\cdots +a_s \not=0}^\infty \frac{1}{q^{F(a_1,\ldots,a_s)}} \prod_{j=1}^s \frac{q^{a_j+1}-1}{q-1}\\
& \ge & \frac{1}{(q-1)^s} \sum_{a_1,\ldots,a_s=0\atop a_1+\cdots +a_s \not=0}^\infty \frac{1}{(a_1+\cdots+a_s)^s \log (a_1+\cdots+a_s)}\\
& = & \frac{1}{(q-1)^s} \sum_{d=1}^\infty \frac{1}{d^s \log d} {s+d-1 \choose d}\\
& \ge & \frac{1}{(q-1)^s} \frac{1}{(s-1)!} \sum_{d=1}^\infty \frac{1}{d \log d}\\
& = & \infty,
\end{eqnarray*}
where we used that ${s+d-1 \choose d} \ge \frac{d^{s-1}}{(s-1)!}$.

Now we use Lemma~\ref{le4} and find that the set $\overline{M}$ for our choice of $F$ as in \eqref{defF} and $P$ as in \eqref{defP} has measure $\mu_s(\overline{M})=1$.

Next we consider the finite collection of $s$-tuples $$(\beta_1(k_1,u_1),\ldots,\beta_s(k_s,u_s))$$ for $u_1,\ldots,u_s=0,1,\ldots,J$ but not all equal to 0. Note that each of these $\beta_i(k_i,u_i)$ is either 0 or has $\len(\beta_i(k_i,u_i))>\len(k_i)$ for each $(k_1,\ldots,k_s)$ which is an element from $P$ defined above.

Now we use Lemma~\ref{le3b} where we choose $\overline{P}$ as $\overline{P}=P$ from \eqref{defP} and for any choice of $u_1,\ldots,u_s=0,1,\ldots,J$ but not all equal to 0, we choose the $\beta_i(k_i)$ from Lemma~\ref{le3b} as $\beta_i(k_i)=\beta_i(k_i,u_i)$. Then for the corresponding set $\widetilde{M}:=\widetilde{M}(u_1,\ldots,u_s)$ of Lemma~\ref{le3b} we have $\mu_s(\widetilde{M})=0$.

We set $$M:= \overline{M} \setminus \bigcup_{u_1,\ldots,u_s=0 \atop (u_1,\ldots,u_s)\not=(0,\ldots,0)}^J \widetilde{M}(u_1,\ldots,u_s)$$ and find that $\mu_s(M)=1$. 

Now we make a suitable choice for $C_1,\ldots,C_s$ and for $\bsk^\ast$.  Let $(C_1,\ldots,C_s) \in M$ and let $(\widetilde{k}_1,\ldots,\widetilde{k}_s) \in P$ be such that
$$\nu(C_1^\top \vec{\widetilde{k}}_1+\cdots +C_s^\top \vec{\widetilde{k}}_s) \le -F(r_1,\ldots,r_s) \le -(r_1+\cdots+r_s)$$ and $$\nu(C_1^\top(\vec{\widetilde{k}}_1+\vec{\beta}_1(\widetilde{k}_1,u_1))+\cdots + C_s^\top(\vec{\widetilde{k}}_s+\vec{\beta}_s(\widetilde{k}_s,u_s))) \ge -\frac{r_1+\cdots+r_s}{2},$$ where $r_i=\len(\widetilde{k}_i)$ and $\widetilde{k}_i=q^{\widetilde{a_i}-1}+\widetilde{\ell}_i$. By the definition of $M$ there are infinitely many such $s$-tuples $(\widetilde{k}_1,\ldots,\widetilde{k}_s)$.

Let $m:=\lfloor F(r_1,\ldots,r_s)\rfloor$ and $N=q^{m-1}$. We analyze the first summand in \eqref{defLambda}: Let  $$C_1^\top\vec{\widetilde{k}}_1+\cdots+C_s^\top \vec{\widetilde{k}}_s=(0,\ldots,0,b_w,b_{w+1},\ldots)^\top$$ with $b_w \not=0$. Then we have $$-w=\nu((0,\ldots,0,b_w,b_{w+1},\ldots)^\top)=\nu(C_1^\top \vec{\widetilde{k}}_1+\cdots+C_s^\top\vec{\widetilde{k}}_s) \le -F(r_1,\ldots,r_s) \le -m$$ and hence $w \ge m$. This means that $G(N,\vec{b}(\widetilde{k}_1,\ldots,\widetilde{k}_s))=N$ and hence we obtain
$$\left|\left[\prod_{j=1}^s \theta(\widetilde{k}_j)\right] G(N,\vec{b}(\widetilde{k}_1,\ldots,\widetilde{k}_s))\right| \ge c_6(q,s) \frac{N}{q^{\widetilde{a}_1+\cdots +\widetilde{a}_s}}.$$ 

Now we turn to the second  summand in \eqref{defLambda}. Let $$ C_1^\top(\vec{\widetilde{k}}_1+\vec{\beta}_1(\widetilde{k}_1,u_1))+\cdots + C_s^\top(\vec{\widetilde{k}}_s+\vec{\beta}_s(\widetilde{k}_s,u_s))=(0,\ldots,0,b_w,b_{w+1},\ldots)^\top$$ with $b_w \not=0$. Then we have 
\begin{eqnarray*}
-w & = & \nu((0,\ldots,0,b_w,b_{w+1},\ldots)^\top)\\
& = & \nu(C_1^\top(\vec{\widetilde{k}}_1+\vec{\beta}_1(\widetilde{k}_1,u_1))+\cdots + C_s^\top(\vec{\widetilde{k}}_s+\vec{\beta}_s(\widetilde{k}_s,u_s)))\\
& \ge & -\frac{r_1+\cdots +r_s}{2}
\end{eqnarray*}
and hence $$w \le \frac{r_1+\cdots +r_s}{2}.$$ This means that 
\begin{eqnarray*}
|G(N,\vec{b}(\widetilde{k}_1+\beta_1(\widetilde{k}_1,u_1),\ldots,\widetilde{k}_s+\beta_s(\widetilde{k}_s,u_s))|
& \le & c_7(q) q^w \\
& \le & c_7(q) q^{\frac{r_1+\cdots +r_s}{2}}\\
& = & c_8(q,s) q^{\frac{\widetilde{a}_1+\cdots+\widetilde{a}_s}{2}}
\end{eqnarray*}
and hence we obtain 
\begin{eqnarray*}
\left|\sum_{u_1,\ldots,u_s \ge 0 \atop \exists j:\ u_j >J} \left[\prod_{j=1}^s \theta(\widetilde{k}_j+\beta_j(\widetilde{k}_j,u_j))\right] G(N,\vec{b}(\widetilde{k}_1+\beta_1(\widetilde{k}_1,u_1),\ldots,\widetilde{k}_s+\beta_s(\widetilde{k}_s,u_s)) \right| \\ \le c_9(q,s) \frac{q^{\frac{\widetilde{a}_1+\cdots+\widetilde{a}_s}{2}}}{q^{\widetilde{a}_1+\cdots+\widetilde{a}_s}}.
\end{eqnarray*}

Altogether we have 
\begin{eqnarray*}
|\Lambda| & \ge &  c_6(q,s) \frac{N}{q^{\widetilde{a}_1+\cdots +\widetilde{a}_s}} - c_9(q,s) \frac{q^{\frac{\widetilde{a}_1+\cdots+\widetilde{a}_s}{2}}}{q^{\widetilde{a}_1+\cdots+\widetilde{a}_s}}-c_5(q,s) \frac{N}{q^{a_1^\ast + \cdots + a_s^\ast}} \frac{1}{q^{J/q}}\\
& \ge & c_{10}(q,s) \frac{N}{q^{\widetilde{a}_1+\cdots +\widetilde{a}_s}}
\end{eqnarray*}
for $J$ large enough and for $\len(\widetilde{k}_1)+\cdots +\len(\widetilde{k}_s)$ large enough. Now
\begin{eqnarray*}
\frac{N}{q^{\widetilde{a}_1+\cdots +\widetilde{a}_s}} & \ge & c_{11}(q,s) q^{F(r_1,\ldots,r_s)-r_1-\cdots -r_s} \\
& = & c_{11}(q,s)(r_1+\cdots+r_s) \log(r_1+\cdots+r_s) \\
& \ge & c_{12}(q,s) (\log N)^s \log \log N.
\end{eqnarray*}

From the definition of $\Lambda$ it follows that there exists an $\bsx \in \QQ^s(q^m) \subseteq[0,1)^s$ such that $$|D(\bsx,N)| \ge  c_{13}(q,s) (\log N)^s \log \log N$$ and the proof of Theorem~\ref{th2} is finished. \hfill $\qed$

\begin{small}
\noindent\textbf{Authors' address:}\\
\noindent Gerhard Larcher, Friedrich Pillichshammer\\
Institut f\"{u}r Finanzmathematik, Universit\"{a}t Linz, Altenbergerstr.~69, 4040 Linz, Austria\\
E-mail: \\ \texttt{gerhard.larcher@jku.at},\\ 
\texttt{friedrich.pillichshammer@jku.at}
\end{small}

\end{document}